\newtheorem{lemma}{Lemma}[section]
\newtheorem{proposition}[lemma]{Proposition}
\newtheorem{theorem}[lemma]{Theorem}
\newtheorem{principle}{Principle}[section]
\theoremstyle{definition}
\theoremstyle{definition} \newtheorem{defn}{Definition}[section]
\theoremstyle{remark} 
\theoremstyle{plain} \newtheorem{prop}[defn]{Proposition}
\theoremstyle{plain} 
\theoremstyle{plain} 
\theoremstyle{remark} \newtheorem{rem}[defn]{Remark}
\newcommand{\be}{\begin{equation}}
\newcommand{\ee}{\end{equation}}
\renewcommand{\geq}{\geqslant}                                     
\renewcommand{\leq}{\leqslant}                                     
\title{Cullen's Stability Principle and Weak Solutions of the Free-surface Semi-geostrophic Equations}
\author{M. J. P. Cullen\thanks{Met Office, Exeter. E-mail: \href{mike.cullen@metoffice.gov.uk}{mike.cullen@metoffice.gov.uk}.}, \hspace{1mm} T. Kuna\thanks{Department of Mathematics and Statistics, University of Reading. E-mail: \href{t.kuna@reading.ac.uk}{t.kuna@reading.ac.uk}.}, \hspace{1mm} B. Pelloni$^{\ddagger}$ and \hspace{1mm}M. Wilkinson\thanks{Maxwell Institute for Mathematical Sciences and Department of Mathematics, Heriot-Watt University, Edinburgh. E-mails: \href{b.pelloni@hw.ac.uk}{b.pelloni@hw.ac.uk}, \href{mark.wilkinson@hw.ac.uk}{mark.wilkinson@hw.ac.uk}.}}
\date{}
\begin{document}
\maketitle

\begin{abstract}
\noindent The semi-geostrophic equations are used widely in the modelling of large-scale atmospheric flows. In this note, we prove the global existence of weak solutions of the incompressible semi-geostrophic equations, in geostrophic coordinates, in a three-dimensional domain with a free upper boundary. The proof, based on an energy minimisation argument originally inspired by Cullen’s Stability Principle, uses optimal transport results as well as the analysis of Hamiltonian ODEs in spaces of probability measures as studied by Ambrosio and Gangbo. We also give a general formulation of Cullen's Stability Principle in a rigorous mathematical framework.
\end{abstract}

\section{Introduction}\label{introduction}
The semi-geostrophic equations for an incompressible flow subject to a constant Coriolis force comprise the following system of equations for an unknown Eulerian velocity field $u$, geostrophic velocity field $u_{g}=(u_{g, 1}, u_{g, 2}, 0)$, pressure $p$, and density $\rho$,
\begin{equation}\label{sgincom}
\left\{
\begin{array}{l}
\displaystyle
D_t u^g + e _3 \wedge (u - u^g)= 0,
\vspace{2mm} \\
D_t \rho = 0,\vspace{2mm} \\
\nabla \cdot u=0,\vspace{2mm} \\
\nabla p = (u_2^g, -u_1^g, \rho),
\end{array}
\right.
\end{equation}
where $D_t$ denotes the material derivative operator, namely
 \be
D_t=\partial_t+ u\cdot\nabla.
\label{lagrder}\ee
In this note, we shall solve the above system formulated in so-called {\em geostrophic coordinates} in the time-dependent spatial domain $\Omega_{h_{t}}\subset \mathbb{R}^3$ given by 
\begin{equation*}
\Omega_{h_{t}}:= \{ (x_1, x_2, x_3) \in \mathbb{R}^3 : (x_1, x_2) \in B\hspace{2mm}\text{and}\hspace{2mm} 0 <x_3 < h( x_1, x_2,t) \},
\end{equation*}
with the upper free surface denoted by $\mathcal{S}_{t}$, namely
\begin{equation*}
\mathcal{S}_{t}:=\left\{
(x_{1}, x_{2}, h(x_{1}, x_{2}, t))\in\mathbb{R}^{3}\,:\,(x_{1}, x_{2})\in B
\right\},
\end{equation*}
where $B \subset \mathbb{R}^2$ is a fixed open bounded set to be considered as the base of the fluid domain, while $h$ is an unknown surface height function which characterises the free surface $\mathcal{S}_{t}$ in the absence of singularity formation, e.g. splashes or overturning crests. Moreover, in this work, the Eulerian velocity field $u$ is subject to the following flux free condition on the time-independent part of the boundary,
\begin{equation}\label{bc1}
u\cdot n =0 \quad \text{on}\hspace{2mm}\partial\Omega_{h_{t}}\setminus\mathcal{S}_{t},
\end{equation}
whilst it is subject to the kinematic boundary condition
\begin{equation}\label{bc2}
\frac{\partial h}{\partial t}+\sum_{j=1}^{2}u_{j}\frac{\partial h}{\partial x_{j}}=0
\end{equation}
on the free surface $\mathcal{S}_{t}$. Finally, the pressure $p$ is assumed to be constant on $\mathcal{S}_{t}$ for all times. For a model which includes more physics of the behaviour of large-scale atmospheric flows, one ought to consider the {\em compressible} semi-geostrophic equations together with a {\em variable} Coriolis parameter and a free upper boundary condition; we refer the reader to the monograph of Cullen (\cite{cullen}, chapter 4) for more information on the physics of semi-geostrophic flows. The mathematical complexity of this problem has meant that so far results in the literature have only been obtained after relaxing one or more of these conditions. We now give a brief summary of those mathematical results pertinent to system \eqref{sgincom} above.
\subsection{Brief Overview of the State of the Art}
In \cite{benamou}, Benamou and Brenier assumed the atmospheric fluid under study to be incompressible, the Coriolis parameter constant and the fluid domain to be fixed and independent of time. By regarding $\nabla p(\cdot, t)$ as a diffeomorphism for all times $t$, inspired by the original work of Hoskins in \cite{hoskins}, the authors derived what has been termed the {\em dual formulation} of the semi-geostrophic equations in so-called geostrophic co-ordinates that reveals the formal Hamiltonian structure of the dynamics. Indeed, in this formulation, the equations are interpreted as a Monge-Amp\`{e}re equation coupled with a transport equation corresponding to a time-dependent $\mathrm{BV}_{\mathrm{loc}}(\mathbb{R}^{3})$-vector field, and this elegant interpretation yields the proof of the existence of weak solutions of the system in geostrophic coordinates by way of the well-known Polar Factorisation Theorem of Brenier \cite{brenier}. 

This result was generalised by Cullen and Maroofi in \cite{maroofi} to prove the existence of weak solutions of the 3-dimensional {\em compressible} system, still under the assumption of a fixed fluid domain subject to a no-flux boundary condition.  

In \cite{gangbo}, Cullen and Gangbo relaxed the assumption of a rigid  fluid domain by assuming the more physically-appropriate free boundary condition for the incompressible system. However, the authors made the additional assumption of a constant potential temperature, and thus obtained a 2-dimensional system, known as the {\em semi-geostrophic shallow water system}, posed on a fixed two-dimensional domain. As such, in their formulation, the presence of a free surface was transformed away from the problem. After passing to variables in geostrophic coordinates, the authors proved the existence of weak solutions of the resulting problem in geostrophic coordinates.

All results above were obtained for the dual formulation of the equations in geostrophic coordinates, which is also the setting we consider in the present paper. However, we mention for completeness more recent results regarding the existence of solutions in Lagrangian coordinates. The first step in this direction was taken by Cullen and Feldman, who proved in \cite{feldman} the existence of Lagrangian solutions in physical variables for the rigid boundary case, a result that was extended in Cullen, Gilbert and Pelloni in \cite{CGP1} to the compressible system.  In a relatively-recent pair of works \cite{ambnew,ambnew2}, Ambrosio, Colombo, De Philippis and Figalli succedeed in constructing weak solutions of the semi-geostrophic equations in Eulerian co-ordinates for a small class of initial data.

\subsection{Contributions of this Article}
Motivated in part by the original unpublished paper of Cullen, Gilbert, Kuna and Pelloni \cite{CKGParx}, in his work \cite{cheng} Cheng recently proved the existence of Lagrangian weak solutions of the {\em incompressible system, in three-dimensional space, in a domain with a free upper boundary}. He also gave a direct proof, without appealing to the work of Ambrosio and Gangbo \cite{ambgangbo}, of the existence of weak solutions of the free-surface system in geostrophic coordinates. The latter result was first announced in the PhD thesis of Gilbert \cite{dkgthesis}. In this work, we give a more concise presentation of the results in \cite{CKGParx} and introduce some novel elements:
 \begin{itemize}
 \item
To the knowledge of the authors, for the first time, we give a rigorous mathematical formulation of {\em Cullen's Stability Principle} for solutions of the semi-geostrophic equations. We do this by making use of the notion of {\em inner variation} of an energy functional (see Giaquinta and Hildebrandt \cite{giaquinta2013calculus}). Using this, we offer a precise definition of stable weak solution of the free-surface system;
\item We introduce the notion of {\em $\mathcal{A}$-Hamiltonian} which allows us to develop a general theory of free-surface problems when the source measure in the Monge-Kantorovich problem is an unknown;
\item We obtain our proof of the main existence result (See Theorem \ref{new5.5} below) by employing the general theory of Hamiltonian ODEs in Wasserstein spaces of probability measures due to Ambrosio and Gangbo in \cite{ambgangbo}. The strategy of the proof is to show that the Hamiltonian of the system, given by the geostrophic energy, satisfies the conditions necessary to invoke the general theory of \cite{ambgangbo}. This approach is more direct than that taken in \cite{cheng}, in which the author constructs a dynamics by way of a time-stepping algorithm `by hand'.
 \end{itemize}
 
\subsection{Main Result of this Article}
The main result states the existence of solutions for system \eqref{sgincom} formulated in geostrophic coordinates, namely equation \eqref{fun} below. We state this for initial surface profiles assumed to be in a space of Lipschitz continuous functions. In particular, the solutions we construct have the property that the free surface $\mathcal{S}_{t}$ is for all times the graph of a Lipschitz function. 

\begin{theorem}\label{new5.5}

Let $1 \leq p \leq \infty $, and let $\mathcal{P}^{2}_{ac}(\mathbb{R}^{3})\ni \nu _0\ll \mathscr{L}^{3}$ with a compactly-supported density in $L^p (\mathbb{R}^{3})$ be given. Let also $h_0\in W^{1,\infty}(B)$ be given, and be compatible with $\nu_{0}$ in the sense that
\begin{equation*}
h_{0}=\underset{h\in\mathcal{A}_{\ast}}{\mathrm{arg\,min}}\left(
\min_{T\in\mathcal{T}(\sigma_{h}, \nu_{0})}e(x, T(x))\,dx
\right),
\end{equation*}
where $\mathcal{A}_{\ast}$ is defined in \eqref{astar} below. It follows that the free-surface semi-geostrophic equations in geostrophic coordinates given by
\begin{equation}\label{gcfirst}
\left\{
\begin{array}{l}
\partial_{t}\nu+\nabla\cdot(J(\mathrm{id}_{\mathbb{R}^{3}}-\nabla P^{\ast})\nu)=0,\vspace{2mm}\\
\mathrm{det}\,D^{2}P^{\ast}=\nu,
\end{array}
\right.
\end{equation}
admits a corresponding stable\footnote{The precise definition of stable global-in-time weak solution of this system is given in \ref{astabweak} below.} global-in-time weak solution $(h, \nu)$.
\end{theorem}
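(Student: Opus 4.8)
The plan is to cast the free-surface dynamics \eqref{gcfirst} as a Hamiltonian ODE in the Wasserstein space $\mathcal{P}^{2}_{2}(\mathbb{R}^{3})$ driven by the geostrophic energy, and then to verify that this energy satisfies the hypotheses of the Ambrosio--Gangbo existence theory for Hamiltonian flows in spaces of probability measures. Concretely, I would first identify the correct Hamiltonian: for a measure $\nu$, one sets $\mathcal{H}(\nu)$ to be the minimal transport cost $\min_{T\in\mathcal{T}(\sigma_{\bar h}, \nu)} \int e(x, T(x))\,dx$, where $\bar h = \bar h(\nu)$ is the energy-minimising surface profile over the admissible class $\mathcal{A}_{\ast}$ — i.e.\ the free surface is \emph{not} prescribed but is itself obtained by an inner minimisation, which is precisely the content of the $\mathcal{A}$-Hamiltonian notion introduced earlier. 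The first substantive step is therefore to show this inner minimisation problem is well posed: existence of an optimal $\bar h \in W^{1,\infty}(B)$ (using compactness in the Lipschitz class together with lower semicontinuity of the transport cost under the relevant convergence), and a characterisation of $\bar h$ via an Euler--Lagrange / inner-variation condition, which is where Cullen's Stability Principle enters and pins down the sense in which $(h,\nu)$ is a \emph{stable} weak solution.

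Next I would compute the Wasserstein subdifferential of $\mathcal{H}$ and show it is single-valued and given, via the Brenier/Knott--Smith optimality conditions, by the expected velocity $J(\mathrm{id}_{\mathbb{R}^{3}} - \nabla P^{\ast})$, where $P^{\ast}$ is the Legendre-type potential dual to the optimal transport map and $\det D^{2}P^{\ast} = \nu$ is the Monge--Amp\`ere constraint. This identifies the transport equation in \eqref{gcfirst} as the continuity equation for the Hamiltonian vector field $\mathbb{J}\,\partial\mathcal{H}$. I would then check the structural hypotheses of Ambrosio--Gangbo: (i) $\mathcal{H}$ is bounded on and restricts the dynamics to a set of measures with uniformly compactly supported densities bounded in $L^{p}$ (so $L^{p}$ and support bounds propagate — this follows from the $W^{1,\infty}$ a priori control of $h_{t}$ and the boundedness of $\Omega_{h_{t}}$), (ii) the appropriate regularity/continuity of $\nu \mapsto \nabla P^{\ast}_{\nu}$ and of $\nu\mapsto \bar h(\nu)$ along the flow, and (iii) the local Lipschitz-type or approximability condition on the velocity field needed to run the Ambrosio--Gangbo scheme. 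Granting these, their theorem yields a global-in-time weak solution $\nu_{t}$, and one recovers $h_{t}$ pointwise in time as the associated minimiser; the compatibility hypothesis on $(h_{0},\nu_{0})$ ensures the initial datum lies on the constraint manifold so the construction starts correctly.

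The main obstacle, as I see it, is the coupling between the transport map and the \emph{unknown} source measure $\sigma_{\bar h}$: unlike the rigid-boundary case of Benamou--Brenier, here both marginals in the Monge--Kantorovich problem move, and one must show that $\nu\mapsto \bar h(\nu)$ is sufficiently regular (at least continuous, ideally with a quantitative modulus) for the composite vector field to satisfy the Ambrosio--Gangbo hypotheses — in particular that no degeneracy or non-uniqueness of the optimal $h$ arises along the trajectory. This is exactly the point at which the energy-minimisation/inner-variation structure must be exploited: uniqueness and stability of $\bar h(\nu)$ should follow from strict convexity (or strict monotonicity of the first inner variation) of the energy in $h$, which I would establish as a preliminary lemma. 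A secondary technical point is propagating the $L^{p}$ bound on the density under the flow, which should reduce to the incompressibility structure of $J(\mathrm{id} - \nabla P^{\ast})$ together with the uniform bounds on the evolving domain; this is routine once the geometry is controlled, so the genuine difficulty is concentrated in the regularity theory for the free-boundary minimisation.
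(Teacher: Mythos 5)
Your overall strategy coincides with the paper's: both treat the free surface through an inner minimisation that defines an $\mathcal{A}$-Hamiltonian $H_{\mathcal{A}_{\ast}}(\nu)$ on the Wasserstein space, identify the (sub)differential of this Hamiltonian with the geostrophic velocity $J(\mathrm{id}_{\mathbb{R}^{3}}-\nabla P^{\ast})$, and then invoke the Ambrosio--Gangbo existence theory for Hamiltonian ODEs on $\mathcal{P}^{2}_{ac}(\mathbb{R}^{3})$. The well-posedness and stability of the outer minimisation $\nu\mapsto \bar h(\nu)$, which you correctly single out as the main obstacle, is in the paper not proved from scratch but imported from Cheng (existence, uniqueness, and continuity from the narrow topology to $L^{\infty}(B)$; see Proposition \ref{chengres}). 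Note also that the admissible class $\mathcal{A}_{\ast}$ is generated by \emph{continuous} profiles, not Lipschitz ones, so your proposed ``compactness in the Lipschitz class'' is not how the minimiser is obtained, and the paper nowhere establishes that the minimising profile is Lipschitz; your suggestion of proving uniqueness via strict convexity in $h$ is plausible but is left entirely unexecuted.

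The one substantive gap is in your list of hypotheses to be verified before Ambrosio--Gangbo can be applied. The conditions are not ``propagation of $L^{p}$ bounds'' or ``a local Lipschitz-type condition on the velocity field''; they are {\bf (H1)} sublinear growth of $\nabla H(\nu)$, {\bf (H2)} almost-everywhere stability of $\nu\mapsto\nabla H(\nu)$ under narrow convergence (essentially your item (ii)), and, crucially, {\bf (H3)} properness, lower semi-continuity and $\lambda$-convexity of $H$ along Wasserstein geodesics. Condition {\bf (H3)} is where the real analytic work of the proof lies: Proposition \ref{newHok} establishes non-emptiness of the subdifferential and $(-1)$-convexity of $-H_{\mathcal{A}_{\ast}}$ by a chain of inequalities exploiting the sub-optimality of composed transport maps between the competing source measures $\sigma_{h}$, $\sigma_{k}$ and targets $\nu$, $\mu$. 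Your plan contains no substitute for this step, and without it the Ambrosio--Gangbo machinery (which is built on $\lambda$-convexity, not on Lipschitz regularity of the velocity) cannot be invoked. The remaining ingredient --- the identification $\nabla H_{\mathcal{A}_{\ast}}(\nu)=\mathrm{id}_{\mathbb{R}^{3}}-S^{\sigma_{h}}_{\nu}$ via perturbations $g_{s}=\mathrm{id}+s\nabla\phi$, push-forwards $\nu_{s}=g_{s}\#\nu$, and the stability of $e$-optimal maps combined with the continuity of $\nu\mapsto h(\nu)$ --- matches what you describe, so apart from the missing convexity argument your outline tracks the paper's proof.
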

We bring to the attention of the reader that while one would expect to furnish the initial-value problem associated to \eqref{sgincom} with $h_{0}$ and $\nabla P_{0}$, one rather furnishes the initial-value problem for \eqref{gcfirst} with $\nu_{0}$ alone. The reason for this will hopefully become clear to the reader in our formulation of the problem in geostrophic coordinates (see section \ref{eulerform} below). Moreover, the precise definition of {\em stable} global-in-time weak solution of \eqref{gcfirst} will be offered in section \ref{cullenrigour} below.
\subsection{Notation}
In all that follows: $\wedge$ denotes the exterior product on $\mathbb{R}^{3}$; $\mathscr{L}^{3}$ denotes the Lebesgue measure on $\mathbb{R}^{3}$; $\nu\ll\mathscr{L}^{3}$ denotes that a measure $\nu$ is absolutely continuous with respect to $\mathscr{L}^{3}$, while $\mathscr{L}_{X}$ denotes the restriction of $\mathscr{L}^{3}$ to a measurable subset $X\subset\mathbb{R}^{3}$; $\mathcal{P}^{2}_{ac}(\mathbb{R}^{3})$ denotes the set of all probability measures $\nu$ on $\mathbb{R}^{3}$ which admit the property $\nu\ll\mathscr{L}^{3}$ and which have a finite second moment on $\mathbb{R}^{3}$; $\mathrm{id}_{X}$ denotes the identity map $x\mapsto x$ on a set $X\subseteq\mathbb{R}^{3}$; if $T=T(x, t)$ is a space-time map, the we write $T_{t}:=T(\cdot, t)$; if $X\subseteq\mathbb{R}^{3}$ is an open set, then for each $k\in \{1, 2, 3, ...\}$, $\mathrm{Diff}^{k}(X)$ denotes the set of all diffeomorphisms of class $\mathcal{C}^{k}$ on X, while $\mathrm{Diff}(X)$ denotes the set of all infinitely-differentiable diffeomorphisms on $X$; $\mathbb{R}^{3\times 3}_{+}$ denotes the set of $3\times 3$ positive semi-definite matrices with real entries; $W^{k, p}(X)$ denotes the Sobolev space of distributionally-differentiable maps on $X$ with smoothness $k$ and integrability $p$. Finally, in all the sequel, we identify any measure which is absolutely continuous (w.r.t. $\mathscr{L}^{3}$) with its corresponding density.
\section{Derivation of the Free Surface Semi-Geostrophic Equations in Geostrophic Coordinates}\label{eulerform}
In this section, we derive the free surface semi-geostrophic equations in geostrophic coordinates which we shall study in all the sequel. We do so by following the original approach of Benamou and Brenier (\cite{benamou}, section 2.2) in the present more complicated case of a time-dependent free surface $\mathcal{S}_{t}$. The main difficulty in performing this derivation is that, unlike in \cite{benamou}, the source measure in a certain important Monge-Kantorovich problem is an unknown when the fluid domain can vary with time, and this point requires some careful discussion.
\subsection{Alternative Eulerian Formulation of the Equations}
The system \eqref{sgincom} is a formulation of semi-geostrophic dynamics in Eulerian coordinates for a fluid with Eulerian velocity profile $u$. However, as the reader will note, there is no explicit time evolution equation for $u$. 
An alternative Eulerian formulation in terms of a conservative vector field can be obtained by defining a {\em modified pressure} $P$ defined pointwise as
\begin{equation*}
P(x, t):=p(x, t)+\frac 12\left(x_{1}^{2}+x_{2}^{2}\right) \quad \text{for}\hspace{2mm}x\in \Omega_{h_{t}}.
\end{equation*}
Equations (\ref{sgincom}) can then be written in the equivalent form given by
\begin{equation}\label{saag}
\left\{
\begin{array}{l}
\displaystyle D_t\nabla P=J(\nabla P-\mathrm{id}_{\Omega_{h_{t}}})\quad \text{on}\hspace{2mm}\Omega_{h_{t}}, \vspace{2mm} \\
\nabla\cdot u=0\quad \text{on}\hspace{2mm}\Omega_{h_{t}}, 
\end{array}
\right.
\end{equation}
where the matrix $J\in\mathbb{R}^{3\times 3}$ is given by 
\begin{equation*}
J=\left(\begin{array}{ccc}
0&-1&0
\\
1&0&0
\\0&0&0
\end{array}\right).
\end{equation*}
The semi-geostrophic equations now read as a semi-linear transport equation in an unknown time-dependent conservative vector field, namely $\nabla P$. The unknown Eulerian velocity field $u$ may be determined by way of the nonlinear condition that it advect $\nabla P$ whilst preserving its property of conservativeness. As originally observed by Cullen and Purser \cite{purser}, it is physically meaningful to solve system \eqref{saag} only for those conservative vector fields which are the gradient of a time-dependent convex-in-space function. We shall return to the importance of convexity in section \ref{cullenrigour} below.
\subsection{Classical Solutions in Eulerian Coordinates}
We begin by stating our definition of classical solution of the initial-value problem for \eqref{sgincom}. To do so, we must first state what we mean by smoothness of maps on a time-dependent graph domain. 
\begin{defn}
Suppose $\tau>0$ and $k\in\{0, 1, 2, ...\}$ are given. Let $B\subset\mathbb{R}^{2}$ be an open set with boundary $\partial B$ of class $C^{k}$. If $h\in C^{k}(B\times (0, T))$ is a given non-negative function, we write ${\cal D}_{h}$ to denote the associated open subset of $\mathbb{R}^{4}$ given by
\begin{equation*}
{\cal D}_{h}:=\bigcup_{t\in (0, \tau)}\Omega_{h_{t}}\times \{t\}.
\end{equation*}
For the given $h$, we say that a one-parameter family of functions 
\begin{equation*}
\mathcal{F}:=\left\{
f_{t}\,:\,0< t< \tau
\right\} \quad \text{with}\hspace{2mm}f_{t}:\Omega_{h_{t}}\rightarrow\mathbb{R}
\end{equation*}
is of class ${\cal C}^{k}$ (on $\mathcal{D}_{h}$) if and only if the map $(x, t)\mapsto f_t(x)$ belongs to $C^{k}({\cal D}_{h})$.
\end{defn}
We are now able to offer the following definition of classical solution of the free surface semi-geostrophic equations.
\begin{defn}[Global-in-time Classical Solutions in Eulerian Coordinates]\label{gitcs}
Suppose given initial data
\begin{equation*}
h_{0}\in C^{1}(B)\cap C^{0}(\overline{B})\quad \text{and} \quad P_{0}\in C^{2}(\Omega_{h_{0}})\cap C^{0}(\overline{\Omega_{h_{0}}})
\end{equation*}
satisfying $
h_{0}> 0$ on $\overline{B}$
and
\begin{equation*}
P_{0}(x_{1}, x_{2}, h_{0}(x_{1}, x_{2}))=\frac 1 2 \left( x_1^2+x_2^2\right)\quad \text{for all}\hspace{2mm}(x_{1}, x_{2})\in\overline{B}.
\end{equation*}
We say that the triple $(h, P, u)$ is a {\bf global-in-time classical solution} of \eqref{saag} for the given data $(h_{0}, P_{0})$ if and only if for every $\tau>0$, one has that
\begin{equation*}
h\in C^{1}(B\times (0, \tau)) \quad \text{with}\hspace{2mm}h>0\hspace{2mm}\text{on}\hspace{2mm}B\times (0, \tau),
\end{equation*}
the maps $P$ and $u$ are of class ${\cal C}^{2}$ and ${\cal C}^{1}$ on ${\cal D}_{h}$, respectively, and satisfy the equations \eqref{saag} and the boundary conditions \eqref{bc1}--\eqref{bc2} pointwise in the classical sense. In particular, the solution preserves the (modified) surface pressure condition
\begin{equation*}
P(x_{1}, x_{2}, h(x_{1}, x_{2}, t), t)=\frac 1 2 \left(x_1^2+x_2^2\right), \quad \text{for all}\hspace{2mm}(x_{1}, x_{2})\in \overline{B}\hspace{2mm}\text{and}\hspace{2mm}0\leq t\leq\tau,
\end{equation*}
and the triple $(h, P, u)$ is compatible with the initial data in the sense that
\begin{equation*}
h(x_{1}, x_{2}, 0)=h_{0}(x_{1}, x_{2})\quad \text{for all}\hspace{2mm}(x_{1}, x_{2})\in B
\end{equation*}
and
\begin{equation*}
\nabla P(x, 0)=\nabla P_{0}(x) \quad \text{for all}\hspace{2mm}x\in\Omega_{h_{0}}.
\end{equation*}
\end{defn}
The definition of a global-in-time classical solution for the initial-value problem associated to \eqref{saag} is immediate from the above. Owing to the dearth of techniques which would allow one to construct smooth solutions in Eulerian coordinates, we instead aim to construct solutions of \eqref{sgincom} in a different and more mathematically-amenable coordinate system.
\subsection{Formulation in Geostrophic Coordinates}
Let us suppose that $(h, P, u)$ is a global-in-time classical solution of \eqref{sgincom} with the additional property that $x\mapsto \nabla P(x, t)$ is smooth diffeomorphism for all times $t$. Following \cite{benamou}, by taking Euclidean inner products throughout \eqref{saag} with $\nabla \xi(\nabla P(x, t), t)$ for any $\xi$ whose associated family $\{\xi_{t}\}_{t>0}$ is of class $\mathcal{C}^{\infty}$ on $\mathcal{D}_{h}$, one can show that
\begin{equation*}
\int_{0}^{\tau}\int_{\nabla P_{t}(\Omega_{h_{t}})}\left(\partial_{t}+(U\cdot\nabla)\right)\xi(X, t)\nu(X, t)\,dXdt=0
\end{equation*}
holds true, where 
\begin{equation*}
U:=J(\mathrm{id}_{\mathbb{R}^{3}}-\nabla P^{\ast})
\end{equation*}
and $\nu$ is defined pointwise as
\begin{equation*}
\nu(X, t):=\mathrm{det}\,D^{2}P^{\ast}(X, t),
\end{equation*}
where $P^{\ast}(\cdot, t)$ denotes the Legendre-Fenchel transform of $P(\cdot, t)$ on the open set $\Omega_{h_{t}}$. In other words, one has that the quantities $\nabla P^{\ast}$ and $\nu$ satisfy the coupled system
\begin{equation}\label{fun}
\left\{
\begin{array}{l}
\partial_{t}\nu+\mathrm{div}(U\nu)=0, \vspace{2mm}\\
\mathrm{det}\,D^{2}P^{\ast}=\nu
\end{array}
\right.
\end{equation}
pointwise in the classical sense on $\nabla P_{t}(\Omega_{h_{t}})$ for each time $t$. As such, this calculation motivates the following definition of global-in-time weak solution of system \eqref{fun}.
\begin{defn}[Global-in-time Weak Solutions in Geostrophic Coordinates]
Suppose $\nu_{0}\in\mathcal{P}^{2}_{ac}(\mathbb{R}^{3})$ and $h_{0}\in W^{1, \infty}(B)$ are given. We say that $(h, \nu)$ is a {\bf global-in-time weak solution} of \eqref{fun} if and only if for any $\tau>0$, one has that
\begin{equation*}
h\in C^{0}([0, \tau); W^{1, \infty}(B))
\end{equation*}
and the map $\nu:[0, \tau)\rightarrow \mathcal{P}^{2}_{ac}(\mathbb{R}^{3})$ is absolutely continuous and satisfies the transport equation in weak form given by
\begin{equation*}
\int_{0}^{\infty}\int_{\mathbb{R}^{3}}\left(\partial_{t}\phi(X, t)+(U\cdot\nabla_{X})\phi(X, t)\right)\nu(X, t)\,dXdt=0
\end{equation*}
for all $\phi\in C^{\infty}(\mathbb{R}^{3}\times (0, \tau))$ for which $\{\phi_{t}\}_{t>0}$ is of class $\mathcal{C}^{\infty}$ on $\mathcal{D}_{h}$. Moreover, for a.e. $t\in (0, \tau)$, the map $P(\cdot, t)$ is a Brenier solution of second boundary value problem for the Monge-Amp\`{e}re equation
\begin{equation*}
\mathrm{det}\,D^{2}P^{\ast}_{t}=\nu_{t}
\end{equation*}
with $\nabla P_{t}(\Omega_{h_{t}})=\mathrm{supp}\,\nu_{t}$.
\end{defn}
In the case of the semi-geostrophic equations in a fixed fluid domain, the second boundary value problem for the Monge-Amp\`{e}re equation in \eqref{fun} is fully determined, in the sense that the source domain $\Omega$ (and therefore the source measure $\mathscr{L}_{\Omega}$) is known and fixed for all times. However, in the present study of the free surface semi-geostrophic equations in geostrophic coordinates, the source domain $\Omega_{h_{t}}$ is an {\em unknown} of the problem for each $t$. Indeed, due to the absence of an evolution equation for the velocity field $u$ in geostrophic coordinates, there is no immediately obvious way by which to determine the free surface function $h$. Understanding how to solve the second BVP for the Monge-Amp\`ere equation in the case of a free surface is one of the contributions of this paper. Our means by which to do this is a careful study of Cullen's Stability Principle.
\subsection{The Geostrophic Energy Functional}
The geostrophic energy, which is constant in time when composed with smooth solutions of the semi-geostrophic equations, is given by the functional
\begin{equation}\label{bbenergy}
E_{\Omega_{h_{t}}}[\nabla p] = \int_{\Omega_{h_{t}}} \! \bigg( \frac{1}{2}((u_1^g)^2 + (u_2^g)^2) + \rho x_3 \bigg)\, dx,
\end{equation}
where $dx$ denotes the restriction of the Lebesgue measure $\mathscr{L}^{3}$ to the open set $\Omega_{h_{t}}$. Formally, a smooth solution of the system is to be regarded as {\em stable} if and only if it admits the following property: 
\begin{principle}[Cullen's Stability Principle]
Stable solutions of \eqref{saag} are those which, at each fixed time $t$, minimise the energy given by (\ref{bbenergy}) with respect to the rearrangements of particles, in physical space, that conserve the absolute momentum $(u_1^g-x_2, u^g_2+x_1)$ and the density $\rho$.
\label{princ1}\end{principle}
Of course, what constitute `rearrangements of particles' is yet to be specified in precise mathematical terms. This principle was expressed in  \cite{shutts} as the requirement that those flows corresponding to critical points of (\ref{bbenergy}) with respect to such constrained rearrangements of particles in physical space are precisely those flows in hydrostatic and geostrophic balance. By way of some formal calculations (namely \cite{cullen}, section 3.2), Cullen has shown principle 2.1 formally to be equivalent to the following:
\begin{principle}[Cullen's Convexity Principle]
Minima of the energy (\ref{bbenergy}) correspond to a modified pressure $P(x, t)$ which is a convex function of $x$. 
\end{principle} 
Whilst having been very successful in leading one's intuition and understanding of the system, and certainly being of importance when analysing system \eqref{saag} with the tools of elliptic PDE theory, these principles have not been expressed in the literature in precise mathematical terms. We do this below by appealing to the notion of {\em inner variation} from the general theory of the calculus of variations.
\subsection{Mathematical formulation of Cullen's Stability Principle}\label{cullenrigour}
In what follows, we employ the following notion of {\em inner variation} of an energy functional (see Giaquinta and Hildebrandt \cite{giaquinta2013calculus}, section 3.1).
\begin{defn}[First Inner Variation]\label{firstvar}
Suppose $X\subset\mathbb{R}^{3}$ is an open bounded set, and $e\in C^{1}(\mathbb{R}^{3}, \mathbb{R})$. For $k\geq 1$ and some $1\leq p\leq \infty$, consider the functional $E:\mathrm{Diff}_{k}(X)\rightarrow\mathbb{R}$ defined by
\begin{equation*}
E[T]:=\int_{X}e(T(x))\,dx \quad \text{for}\hspace{2mm}T\in L^{p}(X).
\end{equation*}
If the limit exists, we say that $\langle \delta E[T], \Phi\rangle\in\mathbb{R}$ defined by 
\begin{equation*}
\langle\delta E[T], \Phi\rangle:=\lim_{\varepsilon\downarrow 0}\frac{E[T\circ \Phi_{\varepsilon}]-E[T]}{\varepsilon} \quad \text{for}\hspace{2mm}\Phi\in\mathrm{Diff}_{k}(X)
\end{equation*}
is the inner variation of $E$ at $T$ in the direction $\Phi\in\mathrm{Diff}_{k}(X)$, where $\Phi_{\varepsilon}:=\mathrm{Id}+\varepsilon\Phi$ for $\varepsilon>0$.
\end{defn}
In a natural manner, we can also define the notion of {\em second inner variation} of an energy functional.
\begin{defn}[Second inner variation]
Under the same conditions of definition \ref{firstvar}, if the limit exists, we say that $\langle\delta^{2}E[T]; \Psi, \Phi\rangle\in\mathbb{R}$ defined by
\begin{equation*}
\langle\delta^{2}E[T]; \Phi, \Psi\rangle:=\lim_{\varepsilon\downarrow 0}\frac{\langle\delta E[T\circ \Psi_{\varepsilon}], \Phi \rangle-\langle\delta E[T], \Phi\rangle}{\varepsilon}
\end{equation*}
for $\Psi, \Phi\in \mathrm{Diff}_{k}(X)$ is the second inner variation of $E$ at $T$ in the direction $(\Phi, \Psi)\in\mathrm{Diff}_{k}(X)\times\mathrm{Diff}_{k}(X)$, where $\Psi_{\varepsilon}:=\mathrm{Id}+\varepsilon\Psi$ for $\varepsilon>0$.
\end{defn}
Using the second inner variation as given above, one can define a notion of stable weak solution of \eqref{fun}. The following definition constitutes a rigorous reformulation of {\em Cullen's Stability Principle} (\ref{princ1})
\begin{defn}[Stable Weak Solutions of \eqref{fun}]\label{stabledef}
Let $(h, \nu)$ be a global-in-time weak solution of \eqref{fun} corresponding to given initial data $(h_{0}, \nu_{0})$. We say that $(h, \nu)$ is \textbf{stable} if and only if for each time $t\geq 0$, it holds that the energy functional $E_{\Omega_h}$ defined by \eqref{bbenergy} satisfies
\begin{equation*}
\langle E_{\Omega_{h_{t}}}[\nabla P_{t}], \Phi \rangle=0
\end{equation*}
and
\begin{equation*}
\langle\delta^{2}E_{\Omega_{h_{t}}}[\nabla P_{t}]; \Phi, \Phi\rangle\geq 0
\end{equation*}
for all $\Phi\in C^{\infty}_{c}(\Omega_{h_{t}}, \mathbb{R}^{3})$.
\end{defn}
At this point, it is far from obvious how one can construct a solution of the active transport equation which admits this stability property. The following useful proposition provides a necessary and sufficient condition for stability of weak solutions.
\begin{prop}
A global-in-time weak solution $(h, \nu)$ of \eqref{fun} with the property that $\nabla P(\cdot, t)\in W^{1, p}(\Omega_{h_{t}})$ for a.e. $t\in (0, \infty)$ and some $1\leq p\leq \infty$ is stable if and only if $\nabla P(\cdot, t)$ is a critical point of $E_{\Omega_{h_{t}}}$ with the property that $P_{t}$ is convex on $\Omega_{h_{t}}$ for almost every $x\in\Omega_{h_{t}}$.
\end{prop}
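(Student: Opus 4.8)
The plan is to compute the first and second inner variations of $E_{\Omega_{h_t}}$ explicitly and to identify the two conditions of Definition \ref{stabledef} with the two asserted properties. Write $P := P_t$, $\Omega := \Omega_{h_t}$, $T := \nabla P_t$, and recall that after the change of variables into geostrophic coordinates the energy \eqref{bbenergy} becomes, up to an additive constant depending only on $\mathrm{vol}(\Omega)$, an integral of the form $E_\Omega[T] = \int_\Omega e(x, T(x))\,dx$ with $e$ the quadratic-plus-linear cost appearing in the Monge--Kantorovich problem that defines $\mathcal{T}(\sigma_h, \nu)$. For $\Phi \in C^\infty_c(\Omega, \mathbb{R}^3)$ and $\Phi_\varepsilon = \mathrm{Id} + \varepsilon\Phi$, a Taylor expansion of $T \circ \Phi_\varepsilon$ together with the $W^{1,p}$ regularity of $T$ gives
\begin{equation*}
E_\Omega[T \circ \Phi_\varepsilon] = E_\Omega[T] + \varepsilon \int_\Omega \nabla_x e(x, T(x)) \cdot \Phi(x)\,dx + O(\varepsilon^2),
\end{equation*}
(after integrating by parts in the $DT \cdot \Phi$ term and using that $\Phi$ is compactly supported, since the domain variation by $\Phi_\varepsilon$ is volume-preserving to first order only when $\mathrm{div}\,\Phi = 0$, which must be handled by the constrained-rearrangement formulation — see below). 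So the first inner variation vanishing for all admissible $\Phi$ is exactly the Euler--Lagrange condition that $T = \nabla P_t$ be a critical point of $E_{\Omega_{h_t}}$.

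For the converse direction and for the second-order condition, I would argue as follows. First, the stability conditions of Definition \ref{stabledef} only test directions $\Phi \in C^\infty_c(\Omega, \mathbb{R}^3)$, i.e. interior perturbations that fix the free surface $\mathcal{S}_t$ pointwise; this is precisely the class of "rearrangements of particles in physical space" of Principle \ref{princ1} once one restricts (or projects) to the incompressibility constraint. The key algebraic fact is that for a map of the form $T = \nabla P$ with $P$ convex, the inner-variation Euler--Lagrange operator associated to the cost $e$ is (a constant multiple of) the semi-geostrophic balance operator, so that $\langle \delta E_{\Omega}[\nabla P], \Phi\rangle = 0$ for all such $\Phi$ holds iff $\nabla P$ realises the optimal transport plan in $\mathcal{T}(\sigma_h, \nu)$ — this is where Brenier's polar factorisation / the uniqueness part of the optimal transport theorem enters. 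Granting criticality, the second inner variation reduces, after a second differentiation and a further integration by parts, to a quadratic form of the schematic shape
\begin{equation*}
\langle \delta^2 E_\Omega[\nabla P]; \Phi, \Phi\rangle = \int_\Omega D^2 P(x)\,[\Phi(x), \Phi(x)]\,dx + (\text{lower-order terms that vanish at a critical point}),
\end{equation*}
so nonnegativity of this form for all $\Phi \in C^\infty_c(\Omega, \mathbb{R}^3)$ is equivalent to $D^2 P(x) \geq 0$ for a.e. $x$, i.e. to convexity of $P_t$ on $\Omega_{h_t}$. Combining the two directions yields the stated equivalence.

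The main obstacle I anticipate is twofold, both concerning rigour of the variation computations under only $W^{1,p}$ regularity. First, the composition $T \circ \Phi_\varepsilon$ with $T \in W^{1,p}$ and $\Phi_\varepsilon$ a near-identity diffeomorphism is not automatically differentiable in $\varepsilon$ in $L^p$; one needs a density/approximation argument (smooth $T$, then pass to the limit using the explicit form of $e$ and dominated convergence) or the Ambrosio--Gangbo machinery on absolutely continuous curves to justify interchanging limit and integral. Second, and more essentially, one must be careful that the test directions $\Phi$ genuinely correspond to admissible constrained rearrangements: since \eqref{fun} carries the incompressibility/Monge--Amp\`ere constraint $\det D^2 P^\ast = \nu$, the "free" variations $\Phi \in C^\infty_c$ must either be shown to be tangent to the constraint manifold to the required order, or the energy must be replaced by its restriction to the constraint (equivalently, one uses that the inner-variation operator is self-adjoint and the constraint is a volume constraint, so a Lagrange-multiplier term appears and is absorbed into the additive constant noted above). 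Once these two technical points are dispatched, the identification of critical points with optimal maps (via polar factorisation) and of nonnegative second variation with convexity is essentially linear algebra plus a standard localisation argument.
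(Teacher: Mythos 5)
Your core argument --- compute the first and second inner variations explicitly, identify the vanishing of the first with criticality, and reduce the second to the quadratic form $\int_{\Omega_{h_t}}\Phi\cdot D^{2}P_{t}\,\Phi\,dx$, whose nonnegativity for all $\Phi\in C^{\infty}_{c}$ is equivalent to a.e.\ convexity of $P_t$ --- is exactly the paper's (very terse, equally formal) proof. The digression through polar factorisation and the divergence-free/Lagrange-multiplier discussion is not needed for this proposition (the optimal-transport characterisation is the content of the \emph{next} proposition in the paper), and the technical caveats you flag about $W^{1,p}$ regularity are simply not addressed by the paper either.
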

\begin{proof}
For any $\Psi\in\mathrm{Diff}(\Omega_{h_{t}}, \mathbb{R}^{3})$, one has that
\begin{equation*}
\begin{array}{l}
\displaystyle \left\langle \frac{\delta E_{\Omega_{h_{t}}}[\nabla P_{t}\circ \Psi_{\varepsilon}]- \delta E_{\Omega_{h_{t}}}[\nabla P_{t}\circ \Psi_{0}], \Phi}{\varepsilon}\right\rangle= \int_{\Omega_{h_{t}}}\left(
\frac{\nabla P_{t}(\Psi_{\varepsilon}(x))-\nabla P_{t}(x)}{\varepsilon}
\right)\cdot\Phi(x)\,dx,
\end{array}
\end{equation*}
from which it follows that
\begin{equation*}
\langle \delta^{2}E_{\Omega_{h_{t}}}[\nabla P_{t}]; \Phi, \Psi\rangle=\int_{\Omega_{h_{t}}}\Phi(x)\cdot D^{2}P(x, t)\Psi(x)\,dx.
\end{equation*}
Thus, it follows from definition \ref{stabledef} above that $(h, \nu)$ is a stable global-in-time weak solution of \eqref{fun} if and only if $\nabla P(\cdot, t)$ is a critical point of $E_{\Omega_{h_{t}}}$ such that $P_{t}$ is convex on $\Omega_{h_{t}}$ for almost every $x\in\Omega_{h_{t}}$.
\end{proof}
We infer from this proposition that if for a.e. time $t$, the map $\nabla P_{t}$ minimises the geostrophic energy $E_{\Omega_{h_{t}}}$ in some suitable class of vector fields $\mathcal{E}[h_{t}]$ containing $\mathrm{Diff}(\Omega_{h_{t}})$, then the associated weak solution $(h, \nabla P, u)$ of \eqref{fun} is stable.

It was observed by Benamou and Brenier in the original study of \eqref{fun} posed on a fixed bounded fluid domain $\Omega\subset\mathbb{R}^{3}$ that a natural means by which to ensure convexity of the geopotential $P(\cdot, t)$ is to treat the minimisation of the geostrophic energy as a {\em Monge} (or, equivalently, in the case we deal with the cost function $e$ in \eqref{cost} below, a {\em Monge-Kantorovich}) problem.  Indeed, it follows from the well-known work of Brenier that $e$-optimal maps are the realised as the gradient of convex functions, where $e:\mathbb{R}^{3}\times \mathbb{R}^{3}\rightarrow\mathbb{R}$ is the cost function, equivalent to the the classical squared Euclidean cost, given by
\begin{equation}\label{cost}
e(x, y):=\frac{1}{2}\left((x_1-y_1)^{2}+(x_2-y_2)^{2}\right)-x_3y_3
\end{equation}
for $x, y\in\mathbb{R}^{3}$. The following proposition is immediate from the above.
\begin{prop}
A global-in-time weak solution $(h, \nu)$ of \eqref{fun} with the property that $\nabla P_{t}\in W^{1, p}(\Omega_{h_{t}})$ for a.e. $t$ is stable if and only if for a.e. $t$, $\nabla P_{t}$ is the $e$-optimal map from the source measure $\mathscr{L}_{\Omega_{h_{t}}}$ to the target measure $\nabla P_{t}\#\mathscr{L}_{\Omega_{h_{t}}}$, i.e.
\begin{equation*}
\nabla P_{t}=\underset{T\in \mathcal{T}(\mathscr{L}_{\Omega_{h_{t}}}, \nabla P_{t}\#\mathscr{L}_{\Omega_{h_{t}}})}{\mathrm{argmin}}\,E_{\Omega_{h_{t}}}[T],
\end{equation*}
where $\mathcal{T}(\mu, \nu)$ denotes the set of all transport plans from $\mu$ to $\nu$.
\end{prop}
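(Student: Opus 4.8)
The plan is to read off this characterisation from the preceding proposition --- which identifies stability with \emph{criticality plus convexity of $P_{t}$} --- by invoking the optimal transport fact that \emph{convexity equals $e$-optimality}, i.e.\ Brenier's theorem \cite{brenier}. First I would record the reformulation, due in the fixed-domain case to Benamou and Brenier \cite{benamou}, that underlies the whole discussion: using the identity $e(x,y)=\tfrac12|x_{h}|^{2}+\tfrac12|y_{h}|^{2}-x\cdot y$ for the cost \eqref{cost} (with $x_{h}:=(x_{1},x_{2})$), one sees that on the set $\mathcal{T}(\mathscr{L}_{\Omega_{h_{t}}},\mu)$ of transport plans with fixed marginals the functional $\gamma\mapsto\int e\,d\gamma$ differs from the classical quadratic transport functional by a constant depending on $\mathscr{L}_{\Omega_{h_{t}}}$ and $\mu$ alone; combined with the identification of $E_{\Omega_{h_{t}}}$ (restricted to such transport plans) with $\int e\,d\gamma$ up to such a constant, this shows that minimising $E_{\Omega_{h_{t}}}$ over $\mathcal{T}(\mathscr{L}_{\Omega_{h_{t}}},\mu)$ is exactly the Monge--Kantorovich problem for $e$ with marginals $\mathscr{L}_{\Omega_{h_{t}}},\mu$. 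Since the source $\mathscr{L}_{\Omega_{h_{t}}}$ is absolutely continuous, Brenier's theorem then furnishes a unique minimiser, attained by a transport \emph{map} which is the gradient of a convex function; conversely, the gradient of any convex function on $\Omega_{h_{t}}$ is the $e$-optimal map from $\mathscr{L}_{\Omega_{h_{t}}}$ onto its own pushforward.

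With this in hand the two implications are short. For the forward direction, if $(h,\nu)$ is stable then by the preceding proposition $P_{t}$ is convex on $\Omega_{h_{t}}$ for a.e.\ $t$, with $\nabla P_{t}\in W^{1,p}(\Omega_{h_{t}})$; writing $\mu_{t}:=\nabla P_{t}\#\mathscr{L}_{\Omega_{h_{t}}}$, the converse half of Brenier's theorem identifies $\nabla P_{t}$ as the unique $e$-optimal map between $\mathscr{L}_{\Omega_{h_{t}}}$ and $\mu_{t}$, hence, by the reduction above, as $\mathrm{argmin}_{T\in\mathcal{T}(\mathscr{L}_{\Omega_{h_{t}}},\mu_{t})}E_{\Omega_{h_{t}}}[T]$. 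For the converse, if for a.e.\ $t$ this minimisation identity holds with $\mu_{t}=\nabla P_{t}\#\mathscr{L}_{\Omega_{h_{t}}}$, then $\nabla P_{t}$ is the $e$-optimal map for the marginals $(\mathscr{L}_{\Omega_{h_{t}}},\mu_{t})$, so by Brenier's theorem it agrees $\mathscr{L}_{\Omega_{h_{t}}}$-a.e.\ with the gradient of a convex function; since $\nabla P_{t}\in W^{1,p}(\Omega_{h_{t}})$, this forces $P_{t}$ itself to be convex a.e.\ on $\Omega_{h_{t}}$. Feeding convexity into the formula $\langle\delta^{2}E_{\Omega_{h_{t}}}[\nabla P_{t}];\Phi,\Phi\rangle=\int_{\Omega_{h_{t}}}\Phi(x)\cdot D^{2}P(x,t)\,\Phi(x)\,dx$ from the proof of the preceding proposition yields the second-variation inequality, while vanishing of the first inner variation follows from minimality; thus $(h,\nu)$ satisfies Definition \ref{stabledef} and is stable.

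The step I expect to need the most care is the precise matching of the two variational principles in play: the \emph{constrained} Monge/Cullen minimality of $\nabla P_{t}$ among transport maps onto the prescribed target $\mu_{t}$, versus criticality of $\nabla P_{t}$ for $E_{\Omega_{h_{t}}}$ under the inner variations $\nabla P_{t}\circ\Phi_{\varepsilon}$, $\Phi_{\varepsilon}=\mathrm{Id}+\varepsilon\Phi$, of Definition \ref{stabledef}. These are \emph{a priori} different problems --- a generic inner variation does not preserve the pushforward $\nabla P_{t}\#\mathscr{L}_{\Omega_{h_{t}}}$, and it is exactly the incompressibility (volume-preservation) hidden in Cullen's notion of a `rearrangement of particles conserving the absolute momentum and density' that reconciles them. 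I would handle this as in the preceding proposition: compute the second inner variation explicitly, observe that it collapses to the quadratic form $\Phi\mapsto\int_{\Omega_{h_{t}}}\Phi\cdot D^{2}P_{t}\,\Phi\,dx$, and conclude that its sign among the admissible (volume-preserving) variations is governed precisely by convexity of $P_{t}$ --- which, by Brenier, is precisely $e$-optimality of $\nabla P_{t}$. Once that correspondence is in place, the remainder is the direct invocation of Brenier's theorem and its converse above, which is why the proposition is \emph{immediate from the above}.
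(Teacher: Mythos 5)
Your argument is correct and follows exactly the route the paper intends: the paper gives no proof beyond declaring the proposition ``immediate from the above,'' meaning precisely the combination of the preceding proposition (stability $\Leftrightarrow$ criticality plus convexity of $P_{t}$) with Brenier's theorem (convexity of $P_{t}$ $\Leftrightarrow$ $e$-optimality of $\nabla P_{t}$ between $\mathscr{L}_{\Omega_{h_{t}}}$ and its pushforward), which is what you spell out. Your additional observations --- the normalisation identity showing that $e$ and the quadratic cost differ by a marginal-dependent constant, and the caveat that generic inner variations do not preserve the target measure so that constrained Monge minimality and inner-variation criticality must be reconciled via volume preservation --- are details the paper silently elides, and your treatment is if anything more careful than the source.
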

\begin{rem}
We refer the reader to the monograph of Villani \cite{villanioldnew} for basic concepts in the theory of optimal transport.
\end{rem}
It is now that we face our first major difficulty in the construction of weak solutions of the free-surface semi-geostrophic system in geostrophic coordinates \eqref{fun}. To have a well-posed Monge or Monge-Kantorovich problem, one needs to provide both the source and target measure. In the current formulation of the problem in geostrophic coordinates, there is no way by which to determine the free surface function $h$, and so the source measure $\mathscr{L}_{\Omega_{h_{t}}}$ is unknown. We now develop a general framework in which one may consider free-surface semi-geostrophic dynamics in geostrophic coordinates.
\subsection{Determination of the Source Measure}\label{detsource}
The idea to which we shall appeal in the sequel (which is consistent with Cullen's Stability Principle as stated in Cullen and Gangbo \cite{gangbo}) is that for each time $t$ it is not only the geopotential $P(\cdot, t):\Omega_{h_{t}}\rightarrow\mathbb{R}$ that ought to be stable (in the sense of definition \ref{stabledef} above), but the free surface $\mathcal{S}_{t}$ should enjoy some kind of `natural' stability property as well. In rough terms, the notion of stability we employ is that a weak solution $(h, \nu)$ ought to have the property that for almost all times the free surface profile $h_{t}$ minimises the functional
\begin{equation}\label{funky}
\eta\mapsto \inf_{\gamma\in\Gamma(\mathscr{L}_{\Omega_{\eta}}, \nu_{t})}\int_{\Omega_{\eta}\times\mathbb{R}^{3}}e(x, y)\,d\gamma(x, y)
\end{equation}
over some appropriate class of surface profiles $\eta:\Omega\rightarrow [0, \infty)$, where the absolutely continuous measure $\sigma_{\eta}\in\mathcal{P}^{2}_{ac}(\mathbb{R}^{3})$ associated to the surface profile $\eta$ is defined to be
\begin{equation*}
\sigma_{\eta}:=\mathds{1}_{H}\mathscr{L}^{3}, \quad \text{where}\hspace{2mm}H:=\{(x_{1}, x_{2}, \eta(x_{1}, x_{2}))\in\mathbb{R}^{3}\,:\,(x_{1}, x_{2})\in B\}.
\end{equation*}
Defining a functional by the minimisation requirement \eqref{funky} gives rise to a double minimisation problem, in which the inner minimisation constitutes a Monge problem (or a Monge-Kantorovich problem), while the outer minimisation is to be tackled using techniques of the calculus of variations. To define our notion of stability of free surface dynamics rigorously, we are required to specify the class of profiles $\eta$ in which the minimisation is considered. We do this now.
\begin{defn}[Admissible Class of Fluid Domains]
A non-empty class ${{\cal{A}}}\subseteq 2^{\mathbb{R}^{3}}$ of subsets is said to be {\bf admissible} if and only if it has the following properties:
\begin{enumerate}
\item each $A\in{\cal{A}}$ admits the representation
\begin{equation*}
A=\Omega_\eta:=\left\{
(x_{1}, x_{2}, x_{3})\in\mathbb{R}^{3}\,:\,(x_{1}, x_{2})\in B\hspace{2mm}\text{and}\hspace{2mm}0<x_{3}<\eta(x_{1}, x_{2})
\right\},
\end{equation*}
for some $\eta:\Omega_{2}\rightarrow[0, \infty]$ which is of class $L^{1}(B)$;
\item each $A\in{\cal{A}}$ is of unit mass, i.e. $\mathscr{L}^{3}(A)=1$ for all $A\in{\cal{A}}$.
\end{enumerate}
\end{defn}
In our context, one interprets each admissible class of sets ${\cal{A}}$ as the collection of all possible configurations that the free surface geostrophic fluid can assume during its motion. It will be particularly convenient in what follows to generate classes of admissible sets by  functional spaces. 
\begin{defn}
Suppose ${\cal Y}$ is a non-empty subset of non-negative maps contained the unit sphere of $L^{1}(\Omega)$. We say that an admissible class of sets ${\cal{A}}$ is {\bf generated by} ${\cal Y}\subset L^{1}(\Omega)$ if and only if
\begin{equation*}
{\cal{A}}=\left\{
\Omega_{\eta}\,:\,\eta\in {\cal Y}
\right\}.
\end{equation*}
\end{defn}
We shall also employ the notation $\mathcal{F}_{{\cal{A}}}$ to denote the class of characteristic functions associated to members of ${\cal{A}}$, namely
\begin{equation*}
\mathcal{F}_{{\cal{A}}}:=\left\{
\mathds{1}_{A}\in L^{\infty}(B\times [0, \infty))\,:\,A\in{\cal{A}}
\right\}
\end{equation*} 
With these definitions in place, we define one of the fundamental object of interest in this article. Indeed, we shall focus our efforts on its analysis in the rest of our work.
\begin{defn}[${\cal{A}}$-Hamiltonian]
Let $\mathcal{A}$ be an admissible class of fluid domains. The associated {\bf ${\cal{A}}$-Hamiltonian} $H_{{\cal{A}}}:\mathcal{P}^{2}_{ac}(\mathbb{R}^{3})\rightarrow[-\infty, \infty]$ is defined by 
\begin{equation}\label{Aham}
H_{{\cal{A}}}(\nu):=\inf_{\sigma\in\mathcal{F}_{{\cal{A}}}}\left(\inf_{\gamma\in\Gamma(\sigma\mathscr{L}^{3},\, \nu)}\int_{\mathbb{R}^{3}\times\mathbb{R}^{3}}e(x, y)\,d\gamma(x, y)\right),
\end{equation}
for $\nu\in\mathcal{P}^{2}_{ac}(\mathbb{R}^{3})$ and $e$ defined by (\ref{cost}).
\end{defn}
\begin{rem}
The use of the term {\em Hamiltonian} will be fully justified in section \ref{newmain} below, when we understand $H_{\mathcal{A}}$ as giving rise to a smooth map on a Wasserstein metric space of probability measures for well-chosen $\mathcal{A}$.
\end{rem}
The specific properties of a given ${\cal{A}}$-Hamiltonian $H_{{\cal{A}}}$ depend on the chosen admissible class ${\cal{A}}$. Therefore the admissible class ${\cal{A}}$ should be viewed as a datum of the problem, thereby becoming {\em a part of the model to be chosen appropriately}. In this article we shall work with one `natural' choice of ${\cal{A}}$, as studied in \cite{cheng}, namely all those generated by bounded continuous functions on $B$. 

We are now in a position to define a notion of stable weak solution of the incompressible free surface semi-geostrophic equations in geostrophic coordinates which depends on the choice of class $\mathcal{A}$ of free surface profiles.
\begin{defn}[${\cal{A}}$-stable Global-in-time Weak Solution of (\ref{fun})]\label{astabweak}
Suppose that $\mathcal{A}$ is an admissible class of fluid domains. We say that a global-in-time weak solution $(h, \nu)$ of is {\bf ${\cal{A}}$-stable} if and only if for a.e. $t\geq 0$, one has that
\begin{equation*}
\int_{\Omega_{h_{t}}}e(x, \nabla P(x, t))\,d\sigma_{h_{t}}(x)=H_{{\cal{A}}}(\nu_{t}).
\end{equation*}
\end{defn}
The above definition makes it clear that if ${\cal{A}}$ is well chosen, one can expect the stability criterion \ref{astabweak} to select, in a unique manner, the form of the free surface at each time. In this framework, the second BVP for the Monge-Amp`{e}re equation is fully determined, and it is in turn possible to construct a global-in-time weak solution of the free surface semi-geostrophic equations expressed in geostrophic coordinates. 
\section{Proof of the Main Theorem}
In all that follows, we work with the particular $\mathcal{A}$-Hamiltonian which corresponds to an admissible $\mathcal{A}_{\ast}$ which is generated by a class of continuous functions, namely
\begin{equation}\label{astar}
\mathcal{A}_{\ast}:=\left\{
\Omega_{\eta}\subset\mathbb{R}^{3}\,:\,\eta\in C^{0}(\overline{B}), \hspace{2mm}\eta\geq 0\hspace{2mm}\text{and}\hspace{2mm}\int_{B}\eta=1
\right\}.
\end{equation}
In this section, we draw upon the results of Cheng which aid in showing that $H_{\mathcal{A}_{\ast}}$ may be considered as a Hamiltonian on a Wasserstein metric space of probability measures, following Ambrosio and Gangbo. Indeed, we quote the following result which is essentially contained in \cite{cheng}.   
\begin{prop}[Cheng (2016)]\label{chengres}
Suppose $\nu\in\mathcal{P}^{2}_{ac}(\mathbb{R}^{3})$ with compact support is given. The functional
\begin{equation}\label{surfacefunk}
\eta\mapsto \inf_{T\in \mathcal{T}(\sigma_{\eta}, \nu)}\int_{\mathbb{R}^{3}}e(x, T(x))\,d\sigma_{\eta}(x)
\end{equation}
admits a minimum over the class $\mathcal{A}_{\ast}$ which is realised by a unique $h\in\mathcal{A}_{\ast}$. The assignment $\nu\mapsto h$ is continuous as a map from $\mathcal{P}^{2}_{ac}(\mathbb{R}^{3})$ (endowed with the narrow topology) to $L^{\infty}(B)$. 
\end{prop}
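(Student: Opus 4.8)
The plan is to treat the minimisation of the functional in \eqref{surfacefunk} --- call it $F_\nu(\eta)$ --- as a problem in the calculus of variations: existence by the direct method in a relaxed class, then the Euler--Lagrange (free-boundary) condition to obtain both regularity of the minimiser and uniqueness, and finally a compactness-plus-uniqueness argument for the continuous dependence on $\nu$. The starting point is the algebraic identity $e(x,y)=\tfrac12|x-y|^{2}-\tfrac12 x_{3}^{2}-\tfrac12 y_{3}^{2}$: since $\tfrac12\int x_3^2\,d\sigma$ and $\tfrac12\int y_3^2\,d\nu$ do not depend on the transport plan, this identifies $F_\nu(\eta)$, for $\eta\in L^3(B)$, with $\tfrac12 W_2^2(\sigma_\eta,\nu)-\tfrac16\int_B\eta^3-\tfrac12\int y_3^2\,d\nu$ (using $\sigma_\eta=\mathds{1}_{\Omega_\eta}\mathscr{L}^3$), reducing matters to the quadratic Monge problem up to explicit $\eta$-dependent terms. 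From this one reads off the relevant continuity: if $\eta_n\to\eta$ in $L^3(B)$ with $\eta_n\geq0$ and $\int_B\eta_n=1$, then $\mathds{1}_{\Omega_{\eta_n}}\to\mathds{1}_{\Omega_\eta}$ in $L^1(\mathbb{R}^3)$, hence $\sigma_{\eta_n}\to\sigma_\eta$ with convergence of second moments, and $W_2^2(\cdot,\nu)$ together with the lower-order terms pass to the limit (stability of optimal transport; see Villani \cite{villanioldnew}). Since continuous profiles are dense for this convergence in $\{\eta\in L^3(B):\eta\geq0,\ \int_B\eta=1\}$, it follows that $\inf_{\mathcal{A}_{\ast}}F_\nu=\inf\{F_\nu(\eta):\eta\in L^3(B),\ \eta\geq0,\ \int_B\eta=1\}$, and it suffices to produce a minimiser in this relaxed class and prove it is continuous.

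For existence in the relaxed class I would take a minimising sequence $(\eta_n)$ and establish an a priori bound ruling out concentration: using the compactness of $\operatorname{supp}\nu$, the Monge cost $\tfrac12 W_2^2(\sigma_\eta,\nu)$ of a tall, thin profile exceeds the gain $\tfrac16\int_B\eta^3$ by a margin coercive in, say, $\|\eta\|_{L^3(B)}$. This comparison is delicate precisely because the leading-order contributions of the two terms cancel, so it is the boundedness of $\operatorname{supp}\nu$ (together with the finiteness of $F_\nu$) that has to be exploited to recover the subleading coercive term; compare Cheng \cite{cheng}. With a bound in a space compactly embedded in $L^3(B)$ (say a uniform $\mathrm{BV}(B)$, or an $L^{p}(B)$ bound with $p>3$, furnished by the same comparison) one extracts a subsequence with $\eta_n\to\eta_\ast$ in $L^3(B)$ and a.e., with $\eta_\ast\geq0$ and $\int_B\eta_\ast=1$, and by the continuity of $F_\nu$ the limit $\eta_\ast$ is a minimiser.

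The heart of the argument is to promote $\eta_\ast$ into $\mathcal{A}_{\ast}$ and to show it is unique. Perturbing $\eta_\ast$ within the relaxed class by $\eta_\ast+\varepsilon\psi$ with $\int_B\psi=0$, and differentiating through the inner Monge--Kantorovich problem using the stability of Kantorovich potentials, one derives the optimality condition that the ($e$-)Kantorovich potential $\phi$ associated with the pair $(\sigma_{\eta_\ast},\nu)$ is constant along the free boundary $\{x_3=\eta_\ast(x_1,x_2)\}$, the value of the constant being the Lagrange multiplier $\lambda$ fixed by $\mathscr{L}^3(\Omega_{\eta_\ast})=1$. Since $\phi$ is, up to smooth terms, a $c$-concave potential for the quadratic cost, it is locally semiconcave and locally Lipschitz, and the monotonicity of the associated Brenier map yields a uniform lower bound $\partial_{x_3}\phi\geq c>0$ near the free boundary; hence $\{\phi=\lambda\}$ is the graph of a function $\eta_\ast\in W^{1,\infty}(B)$, in particular continuous up to $\partial B$, so $\eta_\ast\in\mathcal{A}_{\ast}$. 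Uniqueness follows from the same characterisation together with the strict convexity of the quadratic Monge problem: two minimisers would determine the same optimal transport geometry, hence the same level set $\{\phi=\lambda\}$ at the mass-normalising level, forcing the profiles to coincide.

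Finally, for the continuity of $\nu\mapsto h$, let $\nu_n\to\nu$ narrowly and let $h_n$ be the minimiser for $\nu_n$. The a priori bounds of the previous steps are locally uniform (they depend only on a bound for $\operatorname{supp}\nu$ and the second moment), so $(h_n)$ is equi-Lipschitz and uniformly bounded on $\overline B$; by Arzel\`a--Ascoli a subsequence converges uniformly to some $\tilde h$. Joint continuity of the optimal transport cost in both marginals lets one pass to the limit in the minimality of $h_n$, so $\tilde h$ minimises $F_\nu$, whence $\tilde h=h$ by uniqueness; since every subsequence has a further subsequence converging uniformly to $h$, the whole sequence converges to $h$ in $L^\infty(B)$. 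The main obstacle I anticipate is the third step: deriving the free-boundary Euler--Lagrange condition rigorously (which needs some stability/uniqueness of the Kantorovich potential under the perturbation) and then upgrading the relaxed minimiser from $L^3(B)$ to a Lipschitz --- hence continuous --- profile. The coercivity estimate of the second step is the other genuinely delicate point, for the cancellation reason noted there.
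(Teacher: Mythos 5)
The paper does not actually prove this proposition: its ``proof'' is a one-line citation of Cheng's corollary 2.11 and theorem 2.16, so you are attempting an argument the authors deliberately outsourced. Your skeleton --- the identity $e(x,y)=\tfrac12|x-y|^{2}-\tfrac12x_{3}^{2}-\tfrac12y_{3}^{2}$, hence $F_{\nu}(\eta)=\tfrac12W_{2}^{2}(\sigma_{\eta},\nu)-\tfrac16\int_{B}\eta^{3}-\mathrm{const}$, then the direct method in a relaxed class, a free-boundary Euler--Lagrange condition for regularity and uniqueness, and compactness plus uniqueness for stability in $\nu$ --- is the right outline and broadly tracks Cheng's actual argument. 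But the proposal has genuine gaps at exactly the points you flag, and they are not minor technicalities: they are the content of the cited results.

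First, coercivity. Your own decomposition shows why the claim that the transport cost ``exceeds the gain $\tfrac16\int_{B}\eta^{3}$ by a margin coercive in $\|\eta\|_{L^{3}}$'' is unproved: for a column of height $H$ over a base of area $1/H$ one has $\tfrac12W_{2}^{2}(\sigma_{\eta},\nu)\sim\tfrac16H^{2}\sim\tfrac16\int_{B}\eta^{3}$, an exact leading-order cancellation, so boundedness below and any compactness of minimising sequences live entirely in subleading terms controlled by $\mathrm{supp}\,\nu$; asserting a bound in a space compactly embedded in $L^{3}(B)$ at this point is essentially assuming the theorem. Second, uniqueness: ``two minimisers would determine the same optimal transport geometry'' is circular --- distinct profiles give distinct source measures and hence a priori distinct potentials and level sets; the genuine route is (strict) convexity of $\eta\mapsto F_{\nu}(\eta)$ along a suitable interpolation of column profiles, which you do not supply. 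Third, the regularity upgrade: monotonicity of the Brenier map gives that $\partial_{3}\phi$ is monotone along vertical lines, not the strict bound $\partial_{3}\phi\geq c>0$ needed for $\{\phi=\lambda\}$ to be a (Lipschitz) graph over $B$; note also that the proposition only asserts $h\in C^{0}(\overline{B})$, and Lipschitz regularity is a stronger claim requiring its own proof. Finally, in the continuity step, narrow convergence $\nu_{n}\to\nu$ does not furnish uniformly bounded supports for the $\nu_{n}$, so the ``locally uniform'' equicontinuity bounds you feed into Arzel\`a--Ascoli are not available without an additional argument.
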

\begin{proof}
This follows from \cite{cheng}, corollary 2.11 and theorem 2.16.
\end{proof}
\subsection{Construction of a Free-Surface Dynamics}\label{newmain}
In this section we prove our main result, namely Theorem \ref{new5.5}. As mentioned above, we shall make use of the theory of Hamiltonian ODE of \cite{ambgangbo} in order to construct global-in-time weak solutions of system \eqref{fun}. We refer the reader to that work for basic definitions (such as the Fr\'{e}chet subdifferential of a map on $\mathcal{P}^{2}_{ac}(\mathbb{R}^{3})$, or $\lambda$-convexity). Let us begin with some definitions. 
\begin{defn}[Hamiltonian on $\mathcal{P}^{2}_{ac}(\mathbb{R}^{3})$]\label{hammy}
We say that a map $H:\mathcal{P}^{2}_{ac}(\mathbb{R}^{3})\rightarrow \mathbb{R}$ is a Hamiltonian on $\mathcal{P}^{2}_{ac}(\mathbb{R}^{3})$ if and only if for any $\nu_{0}\in \mathcal{P}^{2}_{ac}(\mathbb{R}^{3})$, it admits the following three properties:\\*[3mm]
{\bf (H1)} There exist associated constants $C_0=C_{0}(\nu_{0}) \in (0, \infty )$ and $R_0=R_{0}(\nu_{0}) \in (0, \infty ]$ such that for all $\nu \in \mathcal{P}_{ac}^2(\mathbb{R}^3)$ with $W_2(\nu , \nu _0) < R_0$, one has $\nu\in D(H)$, $\partial H(\nu)\neq \varnothing$, and $w :=\nabla H(\nu)$ satisfies $|w(y)| \leqslant C_0(1 + |y|)$ for $\nu$-a.e. $y\in \mathbb{R}^3$.\\*[3mm]
{\bf (H2)} If for $\nu \in \mathcal{P}_{ac}^2(\mathbb{R}^3)$ and $\{\nu_{j}\}_{j=1}^{\infty}\subset\mathcal{P}^{2}_{ac}(\mathbb{R}^{3})$ one has $\sup_{j}W_2(\nu_{j}, \nu_0)<R_0$ and $\nu_{j} \rightarrow \nu $ in the narrow topology as $j\rightarrow\infty$, then there exists a (relabelled) subsequence of $\{\nu_{j}\}_{j=1}^{\infty}$ such that $w_{j}:= \nabla H(\nu _{j})$ and $w:= \nabla H(\nu)$ admit the property that $w_{j} \rightarrow w$ $\mathscr{L}^{3}$-a.e. in $\mathbb{R}^3$ as $j \rightarrow \infty $.\\*[3mm]
{\bf (H3)} $H:\mathcal{P}_{ac}^2(\mathbb{R}^3) \rightarrow (-\infty , \infty ]$ is proper, lower semi-continuous and $\lambda$-convex on $\mathcal{P}^{2}_{ac}(\mathbb{R}^{3})$ for some $\lambda \in \mathbb{R}$.
\end{defn}
Condition {\bf (H1)} essentially requires that the growth of the velocity maps $\nabla H(\nu)$ is uniformly sublinear on bounded sets,
while condition {\bf (H2)} is a stability criterion. Condition {\bf (H3)} ensures that any dynamics $t\mapsto \nu_{t}$ which is `generated by' $H$ admits the property $H(\nu_{t})=H(\nu_{0})$ for all times t, which is typical of classical Hamiltonian systems on finite-dimensional symplectic manifolds. Indeed, as noted by Ambrosio and Gangbo, any Hamiltonian $H$ on $\mathcal{P}^{2}_{ac}(\mathbb{R}^{3})$ gives rise to the following abstract ODE thereon, 
\begin{equation}\label{abstractdyn}
\partial _t\nu_{t}+\nabla \cdot (J\nabla H(\nu_{t})\nu_{t})=0,
\end{equation}
where $J\in \mathbb{R}^{3\times 3}$ is the matrix above. The utility of this class of evolution equation in our context of the free-surface semi-geostrophic equations in geostrophic coordinates is that the vector field $J\nabla H(\nu_{t})$ is precisely the geostrophic velocity field at any time $t$. With this noted, let us now state in precise terms what we mean by a weak solution of the initial-value problem associated to \eqref{abstractdyn} above.
\begin{defn}[Global-in-time Weak Solution of \eqref{abstractdyn}]
Suppose an initial $\nu_{0}\in\mathcal{P}^{2}_{ac}(\mathbb{R}^{3})$ is given. We say that $\nu: [0, \infty)\rightarrow\mathcal{P}^{2}_{ac}(\mathbb{R}^{3})$ is an associated global-in-time weak solution of \eqref{abstractdyn} if and only if $t\mapsto\nu_{t}$ is absolutely continuous and satisfies
\begin{equation*}
\int_{0}^{\infty}\int_{\mathbb{R}^{3}}\left(\partial_{t}\phi+\nabla\phi\cdot J\nabla H(\nu_{t})\right)\,d\nu_{t}dt=0
\end{equation*}
for all $\phi\in C^{\infty}_{c}(\mathbb{R}^{3}\times (0, \infty))$. Moreover, $\lim_{t\rightarrow 0+}\nu_{t}=\nu_{0}$ in the narrow topology on $\mathcal{P}^{2}_{ac}(\mathbb{R}^{3})$.
\end{defn}
The strategy of the proof of Theorem \ref{new5.5} is to show that $H_{\mathcal{A}_{\ast}}$ is a Hamiltonian on $\mathcal{P}^{2}_{ac}(\mathbb{R}^{3})$ in the sense of definition \ref{hammy} above, and moreover that the map $J\nabla H_{\mathcal{A}_{\ast}}(\nu)$ coincides precisely with the geostrophic wind $U$ in geostrophic coordinates. It will then follow readily that the existence of a global-in-time weak solution of \eqref{abstractdyn} immediately implies the existence of a stable global-in-time weak solution of \eqref{fun}.

We begin by establishing the following basic properties of the map $H_{\mathcal{A}_{\ast}}$ on $\mathcal{P}^{2}_{ac}(\mathbb{R}^{3})$.
\begin{proposition}\label{newHok}
The map $\nu\mapsto -H_{\mathcal{A}_{\ast}}(\nu)$ is subdifferentiable, lower semi-continuous and $(-1)$-convex on $\mathcal{P}^{2}_{ac}(\mathbb{R}^{3})$.
\end{proposition}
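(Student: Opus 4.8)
The plan is to establish the three asserted properties of $\nu\mapsto -H_{\mathcal{A}_\ast}(\nu)$ by exploiting the variational structure of $H_{\mathcal{A}_\ast}$ as a double infimum of Monge--Kantorovich costs, together with Proposition~\ref{chengres}, which guarantees that for each $\nu$ with compact support the outer infimum over $\mathcal{A}_\ast$ is attained at a unique $h=h(\nu)$, depending narrowly-continuously on $\nu$. The key observation is that, once the optimal surface $h(\nu)$ is fixed, one has $-H_{\mathcal{A}_\ast}(\nu)=-\,\mathcal{C}_e(\sigma_{h(\nu)},\nu)$, where $\mathcal{C}_e(\mu,\nu):=\inf_{\gamma\in\Gamma(\mu,\nu)}\int e\,d\gamma$ is the optimal transport cost for the cost $e$ in \eqref{cost}. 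Since $e(x,y)=\tfrac12(|x'-y'|^2)-x_3y_3$ with $x'=(x_1,x_2)$, we can write $e(x,y)=\tfrac12|x|^2+\tfrac12|y|^2 - x\cdot y - 2x_3y_3 = \tfrac12|x|^2+\tfrac12|y|^2 - \langle x, Qy\rangle$ for the symmetric matrix $Q=\mathrm{diag}(1,1,-1)$; equivalently $e$ differs from the (negative) standard bilinear pairing only by the quadratic terms $\tfrac12|x|^2$ and $\tfrac12|y|^2$. This is precisely the structure that makes $\nu\mapsto\mathcal{C}_e(\mu,\nu)$ behave, up to the fixed term $\tfrac12\int|y|^2\,d\nu$, like a concave (Kantorovich-dual) functional of $\nu$, which is the source of both convexity modulo a quadratic and the subdifferentiability.

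First I would prove $(-1)$-convexity along generalised geodesics (equivalently, that $\nu\mapsto -H_{\mathcal{A}_\ast}(\nu)+\tfrac12 W_2^2(\cdot,\cdot)$-type correction is displacement convex in the Ambrosio--Gangbo sense). Fix $\nu_0,\nu_1$ and a generalised geodesic $\nu_s$ interpolating them; for the outer variable, use that $H_{\mathcal{A}_\ast}(\nu_s)\geq \mathcal{C}_e(\sigma_{h_s},\nu_s)$ for \emph{any} admissible choice of surfaces $s\mapsto h_s$, in particular for $h_s$ held equal to a single competitor, so it suffices to control $s\mapsto\mathcal{C}_e(\mu,\nu_s)$ for fixed $\mu=\sigma_{h}$. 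Writing $\mathcal{C}_e(\mu,\nu_s)=\inf_{T}\int e(x,T(x))\,d\mu$ over transport maps and using the Kantorovich duality $\mathcal{C}_e(\mu,\nu)=\sup_{\varphi\oplus\psi\leq e}\int\varphi\,d\mu+\int\psi\,d\nu$, one sees $-\mathcal{C}_e(\mu,\cdot)$ is an infimum of affine functionals of $\nu$ \emph{after} subtracting $\tfrac12\int|y|^2\,d\nu$; the $e$-concavity of Kantorovich potentials combined with the identity $e(x,y)-\tfrac12|x|^2-\tfrac12|y|^2 = -\langle x,Qy\rangle$ yields, by the standard computation (cf. the displacement-convexity proof for the Wasserstein distance squared, e.g. in Ambrosio--Gangbo), the bound $-H_{\mathcal{A}_\ast}(\nu_s)\leq (1-s)(-H_{\mathcal{A}_\ast}(\nu_0)) + s(-H_{\mathcal{A}_\ast}(\nu_1)) + \tfrac{1}{2}s(1-s)\,\bigl(\text{transport variance term}\bigr)$, with the last term bounded by a $(-1)$-convexity constant along the generalised geodesic; the minus sign on $H$ (optimal transport cost being concave in each marginal up to the square-moment) is what produces $\lambda=-1$ rather than $+1$.

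Next, lower semi-continuity: if $\nu_j\to\nu$ narrowly with second moments bounded, then $\sigma_{h(\nu_j)}\to\sigma_{h(\nu)}$ narrowly by Proposition~\ref{chengres} (the assignment is continuous into $L^\infty(B)$, hence the associated measures converge narrowly), and the optimal transport cost $\mathcal{C}_e$ is jointly lower semi-continuous under narrow convergence of marginals (the cost $e$ is lower semi-continuous and bounded below on the relevant compact supports, after a standard truncation/tightness argument using the uniform second-moment bound). Therefore $H_{\mathcal{A}_\ast}(\nu)\leq\liminf_j H_{\mathcal{A}_\ast}(\nu_j)$ gives upper semi-continuity of $H_{\mathcal{A}_\ast}$, i.e. lower semi-continuity of $-H_{\mathcal{A}_\ast}$; properness (finiteness on a nonempty set, never $-\infty$, where for $-H$ ``proper'' means values in $(-\infty,\infty]$) follows because the supports are compact so $e$ is bounded on them, making $H_{\mathcal{A}_\ast}$ finite whenever $\nu$ has compact support, and always $>-\infty$. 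Finally, subdifferentiability of $-H_{\mathcal{A}_\ast}$: at a point $\nu$ with compact support, let $h=h(\nu)$ be the optimal surface and $P^\ast_\nu$ the Brenier potential realising the $e$-optimal map from $\sigma_h$ to $\nu$; the candidate element of the Fr\'echet subdifferential $\partial(-H_{\mathcal{A}_\ast})(\nu)$ is the velocity $w=\mathrm{id}-\nabla P$ (equivalently, built from the $e$-Kantorovich potential on the $\nu$-side), and one verifies the subdifferential inequality by comparing, for any competitor $\tilde\nu$, the cost $\mathcal{C}_e(\sigma_h,\tilde\nu)$ (using the \emph{same} potentials, which are admissible but no longer optimal for $\tilde\nu$) against $\mathcal{C}_e(\sigma_{h(\tilde\nu)},\tilde\nu)=H_{\mathcal{A}_\ast}(\tilde\nu)$; the suboptimality only helps in the direction needed, and a first-order Taylor expansion of $e$ in the second variable along an optimal plan from $\nu$ to $\tilde\nu$ produces exactly the linear term $\int w\cdot(\text{transport displacement})$, establishing membership in the subdifferential (this is the standard ``envelope'' argument for subdifferentiability of infimal-convolution-type functionals, adapted to the $e$-cost).

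I expect the main obstacle to be the subdifferentiability claim, specifically handling the fact that the outer infimum over $\mathcal{A}_\ast$ is itself an unknown that moves with $\nu$: one must argue that the variation of $h(\nu)$ does not contribute at first order, which is an envelope-theorem phenomenon but requires care because $\mathcal{A}_\ast$ is an infinite-dimensional constraint set and the minimiser's regularity is only $L^\infty(B)\cap C^0(\overline B)$. The clean way around this is to \emph{not} differentiate in $h$ at all: freeze $h=h(\nu)$, use it as a fixed (suboptimal) competitor for nearby $\tilde\nu$, and let the definition of $H_{\mathcal{A}_\ast}$ as an infimum supply the correct inequality direction automatically --- so the envelope argument is purely a one-sided comparison and never needs differentiability of $\nu\mapsto h(\nu)$. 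The remaining technical points (tightness for lsc of the transport cost, the precise $(-1)$-convexity constant along generalised rather than metric geodesics) are routine given the compact-support hypothesis and the results quoted from \cite{ambgangbo} and \cite{cheng}.
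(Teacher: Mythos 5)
Your overall route coincides with the paper's. The subdifferentiability is obtained by the frozen-surface (envelope) comparison --- take the surface optimal for the base measure and use it as a suboptimal competitor for the perturbed measure, so the outer infimum only ever needs to be estimated from one side --- followed by an expansion of the cost $e$ along an optimal plan; the $(-1)$-convexity is reduced to semiconcavity of the squared-Wasserstein-type term (the paper simply invokes \cite{ambrosio2008gradient}, Proposition 9.3.2, where you recompute via Kantorovich duality); and the semicontinuity is delegated to the continuity of $\nu\mapsto h(\nu)$ from Proposition \ref{chengres} (the paper cites Cheng, Corollary 2.15). One adjustment: the subgradient element produced by the paper's computation is $\nabla Q-(I-e_{3}\otimes e_{3})\,\mathrm{id}_{\mathbb{R}^{3}}$, not $\mathrm{id}-\nabla P$; the projection $I-e_{3}\otimes e_{3}$ records the sign flip in the third coordinate of $e$ relative to the standard quadratic cost. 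You flag this adaptation but do not carry it out.

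There is one step that, as written, does not deliver what is claimed. You argue that joint lower semicontinuity of the transport cost under narrow convergence yields $H_{\mathcal{A}_{\ast}}(\nu)\leq\liminf_{j}H_{\mathcal{A}_{\ast}}(\nu_{j})$ and call this ``upper semi-continuity of $H_{\mathcal{A}_{\ast}}$, i.e.\ lower semi-continuity of $-H_{\mathcal{A}_{\ast}}$''. That inequality is the definition of \emph{lower} semicontinuity of $H_{\mathcal{A}_{\ast}}$; lower semicontinuity of $-H_{\mathcal{A}_{\ast}}$ requires the opposite bound $\limsup_{j}H_{\mathcal{A}_{\ast}}(\nu_{j})\leq H_{\mathcal{A}_{\ast}}(\nu)$, and lower semicontinuity of the cost functional cannot supply it. The correct mechanism is again the frozen-surface bound $H_{\mathcal{A}_{\ast}}(\nu_{j})\leq\inf_{\gamma\in\Gamma(\sigma_{h(\nu)},\nu_{j})}\int e\,d\gamma$ combined with \emph{continuity} (not mere lower semicontinuity) of the cost in the second marginal along the sequence, which is available here thanks to the uniform compactness of supports or convergence in $W_{2}$. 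To be fair, the paper's own proof records only ``lower semi-continuity of $H_{\mathcal{A}_{\ast}}$'' while its statement asserts the property for $-H_{\mathcal{A}_{\ast}}$, so the sign ambiguity is inherited from the source; but your written justification establishes the wrong inequality and should be repaired as indicated.
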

\begin{proof}
Suppose $\mu\in\mathcal{P}^{2}_{ac}(\mathbb{R}^{3})$ is given and fixed. We shall show that $\partial H_{\mathcal{A}_{\ast}}(\mu)\neq \varnothing$. For ease of presentation, let us define the following maps:
\begin{itemize}
\item $\nabla P$ denotes the $e$-optimal map in $\mathcal{T}(\sigma_{h}, \nu)$, where $h=h(\nu)\mathcal{A}_{\ast}$ is the surface profile which minimises the free surface geostrophic energy;
\item $\nabla Q$ denotes the $e$-optimal map in $\mathcal{T}(\sigma_{k}, \mu)$, where $k=k(\mu)\in\mathcal{A}_{\ast}$ is the surface profile which minimises the free surface geostrophic energy;
\item $\nabla R$ denotes the $e$-optimal map in $\mathcal{T}(\sigma_{k}, \nu)$;
\item $\nabla S$ denotes the $c$-optimal map in $\mathcal{T}(\nu, \mu)$.
\end{itemize}
Suppose $\nu\in\mathcal{P}^{2}_{ac}(\mathbb{R}^{3})$ is arbitrary. One finds that
\begin{equation*}
\begin{array}{ll}
& -H_{\mathcal{A}_{\ast}}(\nu)+H_{\mathcal{A}_{\ast}}(\mu) \vspace{2mm} \\
=& \displaystyle-\int_{\Omega_{h}}e(\nabla P(x), x)\,dx+\int_{\Omega_{k}}e(\nabla Q(x), x)\,dx\vspace{2mm}\\
\geq & -\displaystyle \int_{\Omega_{k}}e(\nabla R(x), x)\,dx+\int_{\Omega_{k}}e(\nabla Q(x), x)\,dx\vspace{2mm}\\
= & \displaystyle-\int_{\mathbb{R}^{3}}e(y, \nabla R^{\ast}(y))\,d\nu(y)+\int_{\mathbb{R}^{3}}e(y, \nabla Q^{\ast}(y))\,d\mu(y) \vspace{2mm}\\
\geq & \displaystyle-\int_{\mathbb{R}^{3}}e(y, \nabla Q^{\ast}(\nabla S(y)))\,d\nu(y)+\int_{\mathbb{R}^{3}}e(y, Q^{\ast}(y))\,d\mu(y) \vspace{2mm} \\
=& \displaystyle -\int_{\mathbb{R}^{3}}e(\nabla S^{\ast}(y), \nabla Q^{\ast}(y))\,d\mu(y)+\int_{\mathbb{R}^{3}}e(y, Q^{\ast}(y))\,d\mu(y) \vspace{2mm} \\
\geq & \displaystyle\int_{\mathbb{R}^{3}}(\nabla Q(y)-(I-e_{3}\otimes e_{3})y)\cdot (\nabla S(y)-y)\,d\mu(y)-\frac{1}{2}W_{2}^{2}(\nu, \mu),
\end{array}
\end{equation*}
from which it follows by definition that $\nabla Q-(I-e_{3}\otimes e_{3})\mathrm{id}_{\mathbb{R}^{3}}\in \partial H_{\mathcal{A}_{\ast}}(\mu)$. As such, we deduce that $-H_{\mathcal{A}}$ is Fr\'{e}chet subdifferentiable on $\mathcal{P}^{2}_{ac}(\mathbb{R}^{3})$. Lower semi-continuity of $H_{\mathcal{A}_{\ast}}$ on $\mathcal{P}^{2}_{ac}(\mathbb{R}^{3})$ (with respect to the narrow topology thereon) follows from Cheng (\cite{cheng}, corollary 2.15). Finally, $(-1)$-convexity of $-H_{\mathcal{A}_{\ast}}$ follows from the $(-1)$-convexity of the map $\nu\mapsto -\frac{1}{2}W_{2}^{2}(\mu, \nu)$ for any fixed $\mu\in\mathcal{P}^{2}_{ac}(\mathbb{R}^{3})$ together with an application of (\cite{ambrosio2008gradient}, proposition 9.3.2).
\end{proof}
Let us now proceed to the proof our main result.
\begin{theorem}\label{newmainalt}
Let $1\leq p\leq\infty $ and $\nu _0\ll\mathscr{L}^{3}$ with density of class $L^p (\mathbb{R}^{3})$ and of compact support in $\mathbb{R} ^3$ be given. There exists a global-in-time $\mathcal{A}_{\ast}$-stable weak solution of \eqref{fun} associated to $\nu_{0}$.
\end{theorem}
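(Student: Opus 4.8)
The plan is to recognise $H_{\mathcal{A}_{\ast}}$ as a Hamiltonian on $\mathcal{P}^{2}_{ac}(\mathbb{R}^{3})$ in the sense of Definition \ref{hammy}, to apply the existence theory of Ambrosio and Gangbo for the abstract equation \eqref{abstractdyn}, and then to transfer the resulting curve of measures back into a stable weak solution of \eqref{fun}. Fixing $\nu_{0}$ as in the statement, I would organise the argument into three parts: (i) verify the structural conditions (H1)--(H3) for (the appropriate sign of) $H_{\mathcal{A}_{\ast}}$; (ii) identify the induced velocity field $J\nabla H_{\mathcal{A}_{\ast}}(\nu)$ with the geostrophic wind $U=J(\mathrm{id}_{\mathbb{R}^{3}}-\nabla P^{\ast})$; and (iii) read off the surface profile $h$, the Monge--Amp\`ere equation, and the stability property directly from the construction.

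For (i), most of (H3) is already Proposition \ref{newHok}, which supplies lower semicontinuity in the narrow topology, $(-1)$-convexity and Fr\'echet subdifferentiability of $-H_{\mathcal{A}_{\ast}}$; properness holds because on each bounded fluid domain $\Omega_{\eta}$ with $\eta\in\mathcal{A}_{\ast}$ the cost $e$ is continuous and $\nu$ has finite second moment, so $H_{\mathcal{A}_{\ast}}(\nu)\in\mathbb{R}$ for every $\nu\in\mathcal{P}^{2}_{ac}(\mathbb{R}^{3})$. For (H1), write $h=h(\nu)\in\mathcal{A}_{\ast}$ for the minimiser of Proposition \ref{chengres} and let $\nabla P^{\ast}_{\nu}$ be the $e$-optimal map from $\nu$ to $\sigma_{h}=\mathscr{L}_{\Omega_{h}}$; extending the subdifferential computation of Proposition \ref{newHok} identifies the minimal-norm subgradient $w=\nabla H_{\mathcal{A}_{\ast}}(\nu)$ with $(I-e_{3}\otimes e_{3})\mathrm{id}_{\mathbb{R}^{3}}-\nabla P^{\ast}_{\nu}$, so that, since $Je_{3}=0$, one has $Jw=J(\mathrm{id}_{\mathbb{R}^{3}}-\nabla P^{\ast}_{\nu})=U$, which is step (ii). The linear growth bound in (H1) is then almost free: because $J$ annihilates the vertical direction, only the horizontal components of $\nabla P^{\ast}_{\nu}(y)$ enter $U(y)$, and these lie in the \emph{fixed} bounded base $B$ since $\nabla P^{\ast}_{\nu}$ takes values in $\Omega_{h}\subset B\times(0,\infty)$; hence $|U(y)|\leq|y|+\mathrm{diam}(B)$, uniformly in $\nu$.

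Condition (H2) is where the bulk of the work sits, and where Cheng's results do the heavy lifting. Given $\nu_{j}\to\nu$ narrowly with $\sup_{j}W_{2}(\nu_{j},\nu_{0})<R_{0}$, Proposition \ref{chengres} gives $h(\nu_{j})\to h(\nu)$ in $L^{\infty}(B)$, whence $\sigma_{h(\nu_{j})}\to\sigma_{h(\nu)}$ (indeed in total variation, since $\mathscr{L}^{3}(\Omega_{h(\nu_{j})}\triangle\Omega_{h(\nu)})\to 0$). As all source measures $\nu_{j},\nu$ are absolutely continuous, the $e$-optimal maps are unique, and the classical stability theorem for optimal transport (see e.g. Villani \cite{villanioldnew}) yields, along a subsequence, $\nabla P^{\ast}_{\nu_{j}}\to\nabla P^{\ast}_{\nu}$ and hence $w_{j}=\nabla H_{\mathcal{A}_{\ast}}(\nu_{j})\to w$ $\mathscr{L}^{3}$-a.e., which is (H2). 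The delicate points are that the measures are only $W_{2}$-bounded rather than compactly supported and that Cheng's continuity statement is most conveniently invoked for compactly supported targets; these I would handle by a truncation argument based on the growth bound from (H1), together with the observation that, since $\nu_{0}$ has compact support and $U$ grows at most linearly, the dynamics we ultimately build keeps compact support on each bounded time interval. I expect this verification of (H2) to be the main obstacle.

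With (H1)--(H3) established, the Ambrosio--Gangbo existence theorem produces an absolutely continuous curve $t\mapsto\nu_{t}$ in $\mathcal{P}^{2}_{ac}(\mathbb{R}^{3})$ solving \eqref{abstractdyn} weakly with $\nu_{t}\to\nu_{0}$ narrowly as $t\downarrow 0$; by step (ii) this is precisely the transport equation in \eqref{fun} with velocity $U$. Setting $h_{t}:=h(\nu_{t})$, the continuity of $\nu\mapsto h(\nu)$ from Proposition \ref{chengres} together with the uniform Lipschitz bound on the minimisers (also from Cheng) gives $h\in C^{0}([0,\tau);W^{1,\infty}(B))$ for every $\tau>0$, while the Monge--Amp\`ere equation $\det D^{2}P^{\ast}_{t}=\nu_{t}$ and the support condition $\nabla P_{t}(\Omega_{h_{t}})=\mathrm{supp}\,\nu_{t}$ hold by construction via Brenier's theorem; thus $(h,\nu)$ is a global-in-time weak solution of \eqref{fun}. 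Finally, $\mathcal{A}_{\ast}$-stability in the sense of Definition \ref{astabweak} is immediate, since by construction $h_{t}$ realises the minimum defining $H_{\mathcal{A}_{\ast}}(\nu_{t})$, i.e. $\int_{\Omega_{h_{t}}}e(x,\nabla P(x,t))\,d\sigma_{h_{t}}(x)=H_{\mathcal{A}_{\ast}}(\nu_{t})$, which completes the proof.
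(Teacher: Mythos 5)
Your overall strategy coincides with the paper's: verify \textbf{(H1)}--\textbf{(H3)} for $H_{\mathcal{A}_{\ast}}$, identify $J\nabla H_{\mathcal{A}_{\ast}}(\nu)$ with the geostrophic wind, and invoke the Ambrosio--Gangbo existence theorem for \eqref{abstractdyn}. The one place where your argument is genuinely thinner than it needs to be is your step (ii). Proposition \ref{newHok} only exhibits \emph{one} element of the subdifferential --- it shows $\nabla Q-(I-e_{3}\otimes e_{3})\mathrm{id}_{\mathbb{R}^{3}}\in\partial H_{\mathcal{A}_{\ast}}(\mu)$ via a one-sided chain of inequalities --- and that does not by itself identify the minimal-norm element $\nabla H_{\mathcal{A}_{\ast}}(\nu)$, nor show that \emph{every} $\xi\in\partial H_{\mathcal{A}_{\ast}}(\nu)$ has the same tangential projection. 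The paper closes this by running the argument of Ambrosio--Gangbo (their Lemma 6.8): perturb along $g_{s}=\mathrm{id}_{\mathbb{R}^{3}}+s\nabla\phi$, push forward to $\nu_{s}=g_{s}\#\nu$, expand $g_{s}^{-1}$ to second order, and let $s\to 0^{\pm}$ to obtain the two-sided identity $-\int\xi\cdot\nabla\phi\,d\nu=\int(S_{\nu}^{\sigma_{h}}-\mathrm{id}_{\mathbb{R}^{3}})\cdot\nabla\phi\,d\nu$ for all test functions $\phi$, whence $J\pi_{\nu}(\xi)=J(\mathrm{id}_{\mathbb{R}^{3}}-S_{\nu}^{\sigma_{h}})$ for every subgradient $\xi$. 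Crucially, this limit is where Proposition \ref{chengres} (continuity of $\nu\mapsto h(\nu)$, hence $\sigma_{h_{s}}\to\sigma_{h}$ narrowly) and the stability of $e$-optimal maps are actually deployed --- not, as in your write-up, only in the verification of \textbf{(H2)}. So the ``extension of the subdifferential computation'' you gesture at is in fact the main body of the paper's proof and needs to be written out.

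That said, the remainder of your proposal is sound and in places more explicit than the paper. Your justification of \textbf{(H1)} (that $J$ annihilates the vertical direction, so only the horizontal part of $\nabla P^{\ast}_{\nu}$, which lands in the fixed bounded base $B$, enters the velocity) and of \textbf{(H2)} (Cheng's continuity plus stability of optimal maps, with the caveats about compact support) fill in steps the paper dismisses with ``one can now check directly.'' Your closing observation that $\mathcal{A}_{\ast}$-stability is automatic because $h_{t}$ is by construction the minimiser defining $H_{\mathcal{A}_{\ast}}(\nu_{t})$ is also correct and matches the paper's implicit reasoning.
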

\begin{proof}
The proof of this result comes in two parts. For the first part, we characterise the minimal element $\nabla H_{\mathcal{A}_{\ast}}(\nu)$ of the subdifferential $\partial H_{\mathcal{A}_{\ast}}(\nu)$ to ensure that the Hamiltonian $H_{\mathcal{A}_{\ast}}$ admits properties {\bf (H1)} and {\bf (H2)} and, in turn, that any weak solution $t\mapsto \nu_{t}$ of \eqref{abstractdyn} is indeed a $H_{\mathcal{A}_{\ast}}$-stable weak solution of \eqref{fun}. For the second part, we appeal to \cite{ambgangbo} to deduce the existence of a weak solution of the abstract evolution equation \eqref{abstractdyn}.

We follow the argument from (\cite{ambgangbo}, lemma 6.8). Suppose $\nu\in\mathcal{P}^{2}_{ac}(\mathbb{R}^{3})$ is given. To characterise the elements of $\partial H_{\mathcal{A}_{\ast}}(\nu)$, we let $\phi \in C_c^\infty (\mathbb{R}^3)$ and set
\begin{equation*}
g_{s}(y):=y+s\nabla\phi(y)
\end{equation*}
for $y\in\mathbb{R}^{3}$ and $s\in\mathbb{R}$. Note that for $|s|$ sufficiently small, $g_{s}$ is realised as the gradient of a convex function. We now define the measure $\nu _s := g_{s}\#\nu$, and denote by $h_{s}\in\mathcal{A}_{\ast}$ the map which minimises the argument in the free surface Hamiltonian expression $H_{\mathcal{A}_{\ast}}(\nu_{s})$, namely 
\begin{equation*}
H_{\mathcal{A}_{\ast}}(\nu_{s})=\inf_{T\in\mathcal{T}(\sigma_{h_{s}}, \nu_{s})}\int_{\Omega_{h_{s}}}e(x, T(x))\,dx.
\end{equation*}
Let $\xi \in \partial H(\nu )$.  
 Combining the $(-1)-$concavity of $H_{\mathcal{A}_{\ast}}$ on $\mathcal{P}^{2}_{ac}(\mathbb{R}^{3})$ and making use of (\cite{ambgangbo}, proposition 4.2), we obtain
 \begin{equation}\label{newinequality}
-H_{\mathcal{A}_{\ast}}(\nu _s) + H_{\mathcal{A}_{\ast}}(\nu ) - \int_{\mathbb{R}^{3}}\xi(y)\cdot (R^{\nu _s}_{\nu}(y) - y)\,d\nu(y)+ \frac{1}{2}W_2^2(\nu , \nu _s) \geq 0,
  \end{equation}
where $R_{\nu}^{\nu_{s}}$ denotes the unique $e$-optimal map in $\mathcal{T}(\nu, \nu_{s})$. For $s\in\mathbb{R}$ with $|s|$ taken sufficiently small for the choice of $\phi\in C^{\infty}_{c}(\mathbb{R}^{3})$, we conclude that
 \begin{equation*}
 W_2^2(\nu , \nu _s) = \int_{\mathbb{R}^{3}} |y- R^{\nu _s}_{\nu }(y)|^2\, d\nu(y) = \int_{\mathbb{R}^{3}}|y - g_s(y)|^2\,d \nu (y) = s^2\int_{\mathbb{R}^{3}}|\nabla \phi (y)|^2 \,d\nu(y)
 \end{equation*}
 and
 \begin{equation*}
\int_{\mathbb{R}^{3}}\xi (y) \cdot (R^{\nu _s}_{\nu }(y)- y)\,d\nu (y) = \int_{\mathbb{R}^{3}} \xi(y) \cdot (g_s(y) - y)\,d\nu(y)= s\int_{\mathbb{R}^{3}} \xi (y) \cdot \nabla \phi(y)\,d\nu(y).
\end{equation*}
Combining this observation with (\ref{newinequality}), we therefore obtain
\begin{equation}\label{newfloaty}
\begin{array}{ll}
& \displaystyle -s\int_{\mathbb{R}^{3}}\xi(y)\cdot\nabla\phi(y)\,d\nu(y)+\frac{s^{2}}{2}\int_{\mathbb{R}^{3}}|\nabla \phi(y)|^{2}\,d\nu(y) \vspace{2mm} \\
\geq & \displaystyle -H_{\mathcal{A}_{\ast}}(\nu)+H_{\mathcal{A}_{\ast}}(\nu_{s}) \vspace{2mm} \\
\geq & \displaystyle -\int_{\mathbb{R}^{3}}e(g_{s}^{-1}(y), S_{\nu_{s}}^{\sigma_{h_{s}}}(y))\,d\nu_{s}(y)+\int_{\mathbb{R}^{3}}e(y, S_{\nu_{s}}^{\sigma_{h_{s}}}(y))\,d\nu_{s}(y),
\end{array}
\end{equation}
since $g_{s}\#\nu=\nu_{s}$. In the above, $S_{\nu_{s}}^{\sigma_{h_{s}}}$ denotes the unique $e$-optimal transport map from $\nu$ to $\sigma_{h_{s}}$. Noting that one has the expansion
\begin{equation*}
g_{s}^{-1}(y) = y - s\nabla \phi (y) + \frac{s^2}{2}\nabla ^2\phi (y)\nabla \phi (y) + \epsilon (s, y),
\end{equation*}
where $\epsilon$ is a function such that $|\epsilon (s, y)| \leqslant |s|^3\| \varphi \| _{C^3(\mathbb{R}^3)}$. Combining this expression for $g_s^{-1}$ with (\ref{newfloaty}), we conclude that
\begin{equation*}
\begin{array}{ll}
& \displaystyle -s\int_{\mathbb{R}^{3}}\xi(y)\cdot\nabla\phi(y)\,d\nu(y)+s^{2}\int_{\mathbb{R}^{3}}|\nabla\phi(y)|^{2}\,d\nu(y)\vspace{2mm}\\
\geq & \displaystyle s\int_{\mathbb{R}^{3}}(S_{\nu_{s}}^{\sigma_{h_{s}}}(y)-y)\cdot\nabla\phi(y)\,d\nu_{s}(y)+o(|s|)
\end{array}
\end{equation*}
as $|s|\rightarrow 0$. By definition of $g_s$ and $\nu _s$, one has that $\nu _s \rightarrow \nu$ with respect to the narrow topology on $\mathcal{P}_{ac}^{2}(\mathbb{R}^{3})$ as $s\to 0$.  Moreover, by the stability result in proposition \ref{chengres} above, we have that $\sigma_{h_{s}} \rightarrow \sigma_{h}$ in the narrow topology as $s \rightarrow 0$. Hence, dividing both sides first by $s>0$ (also $s<0$) and letting $|s| \rightarrow 0$, we use the stability of $e$-optimal transport maps to obtain
\begin{equation*}
-\int_{\mathbb{R}^{3}} \xi (y) \cdot \nabla \phi (y) \, d\nu(y)= \int_{\mathbb{R}^{3}} \left (S_{\nu}^{\sigma_{h}}(y) -y \right ) \cdot \nabla \phi (y)\,d\nu(y).
\end{equation*}
Thus, we have that $J(\pi_{\nu}(\xi)) = J\left(\mathrm{id}_{\mathbb{R}^{3}} - S_{\nu }^{\sigma_{h}}\right)$, where $\pi _\nu : L^2(\nu ; \mathbb{R}^{3}) \rightarrow T_\nu \mathcal{P}_{ac}^2(\mathbb{R}^{3})$ denotes the canonical orthogonal projection operator. We conclude that
\begin{equation}\label{newoursupdiff}
J(\nabla H(\nu)) =J(\mathrm{id}_{\mathbb{R}^{3}} - S_{\nu} ^{\sigma_{h}}).
\end{equation}
By using elementary properties of $e$-optimal transport maps, one can now check directly that conditions {\bf (H1)} and {\bf (H2)} on $H_{\mathcal{A}_{\ast}}$ hold true. Finally, by an application of (\cite{ambgangbo}, theorem 6.6), we conclude the existence of a global-in-time weak solution of \eqref{abstractdyn}. Owing to the characterisation result \eqref{newoursupdiff}, we may conclude that $t\mapsto \nu_{t}$ is in fact a global-in-time weak solution of \eqref{fun}.
\end{proof}

\section{Closing Remarks}
In this note, we chose the admissible class $\mathcal{A}$ to be $\mathcal{A}_{\ast}$, namely that which is generated by bounded continuous functions on $B$. It would be of interest to extend the main result of our article to a strictly larger class $\mathcal{A}\supset \mathcal{A}_{\ast}$ of free surface profiles (e.g. $\mathcal{A}$ generated by free surface profiles only in $L^{1}(B)$: see \cite{CKGParx}). However, as we have not shown that all minimisers of the functional \eqref{surfacefunk} (in most `reasonable' classes) should be continuous functions on $B$, it is therefore not obvious that the dynamics generated by $H_{\mathcal{A}}$ coincides with, in any sense, that generated by $H_{\mathcal{A}_{\ast}}$. Indeed, in many ways, the success of the theory we have proposed in this note is contingent upon $H_{\mathcal{A}}$ generating the same dynamics for all `reasonable' choices of $\mathcal{A}$. This remains an interesting open problem for future work.

Whilst the original initial-value problem of interest is that associated to \eqref{sgincom}, we have only been able to construct weak solutions of the initial-value problem associated to \eqref{fun}, which should be considered only as an auxiliary system. Ultimately, one would like to be able to construct solutions of \eqref{sgincom} by using solutions of \eqref{fun}. The only result to date which achieves this is for the special case that $\Omega_{h_{t}}$ is a convex subset of $\mathbb{R}^{3}$ (in fact, a fixed convex subset thereof), and is due to Ambrosio, Colombo, De Philippis and Figalli \cite{ambnew2}. As it is unclear (and most likely untrue) that $\Omega_{h_{t}}$ maintains its convexity at all times, if it is so endowed at time $t=0$, we cannot apply the techniques of \cite{ambnew2} to build a weak solution of the free-surface semi-geostrophic equations in Eulerian co-ordinates. Let us mention also that Caffarelli and McCann have developed in \cite{caffarelli2010free} a general theory of optimal transport in domains with free boundaries.  It would be interesting to investigate whether those results can be used to give an alternative proof of the problem considered here in this work.

It is physically correct, when modelling atmospheric flows as opposed to oceanic flows, to understand the analogue of system \eqref{sgincom} in which the Eulerian velocity field $u$ is {\em compressible}. We hope to consider this in future work.

\bibliographystyle{acm}
\bibliography{semigeostrophicbib}
\end{document}